
\documentclass[12pt]{amsart}
\usepackage{amsmath, amssymb, amsfonts, amsthm, latexsym,enumerate}
\usepackage{graphicx}
\usepackage{tikz}

\textwidth=14.8 truecm 
\textheight=22.2 truecm 
\oddsidemargin= 0.9cm
\evensidemargin= 0.9cm 
\topmargin=-0.4cm 
\footskip = 1cm
\parskip = 0.1cm

\pagestyle{plain}

\newtheorem{Theorem}{Theorem}[section]
\newtheorem{Lemma}[Theorem]{Lemma}
\newtheorem{Corollary}[Theorem]{Corollary}
\newtheorem{Proposition}[Theorem]{Proposition}

\theoremstyle{definition}
\newtheorem{Remark}[Theorem]{Remark}
\newtheorem{Example}[Theorem]{Example}

\newtheorem{Conjecture}[Theorem]{Conjecture}

\def\Ass{\operatorname{Ass}}
\def\diam{\operatorname{diam}}
\def\depth{\operatorname{depth}}

\def\NN{{\mathbb N}}
\def\ZZ{{\mathbb Z}} 
\def\a{{\mathbf a}}
\def\b{{\mathbf b}}

\def\e{{\mathbf e}}
\def\0{{\mathbf 0}}
\def\1{{\mathbf 1}}
\def\mm{{\mathfrak m}}

\def\pp{{\mathfrak p}}

\def\E{{\mathcal E}}
\def\F{{\mathcal F}}

\def\D{{\Delta}}
\def\G{{\Gamma}}


\begin{document}

\title{Decreasing behavior of the depth functions of edge ideals}

\author{Ha Thi Thu Hien}
\address{Foreign Trade University, 91 Chua Lang, Hanoi, Vietnam}
\email{thuhienha504@gmail.com}

\author{Ha Minh Lam}
\address{Institute of Mathematics, Vietnam Academy of Science and Technology, 18 Hoang Quoc Viet, Hanoi, Vietnam}
\email{hmlam@math.ac.vn}

\author{Ngo Viet Trung}
\address{Institute of Mathematics, Vietnam Academy of Science and Technology, 18 Hoang Quoc Viet, Hanoi, Vietnam}
\email{nvtrung@math.ac.vn}

\subjclass[2010]{13C15, 13C70, 05E40}
\keywords{graph, ear decomposition, dominating set, independent set, degree complex, edge ideal, ideal power, symbolic power, depth}

\begin{abstract}
Let $I$ be the edge ideal of a connected non-bipartite graph and $R$ the base polynomial ring. Then $\depth R/I \ge 1$ and $\depth R/I^t = 0$ for $t \gg 1$. We give combinatorial conditions for $\depth R/I^t = 1$ for some $t$ in between and show that the depth function is non-increasing thereafter. Especially, the depth function quickly decreases to 0 after reaching 1. We show that if $\depth R/I = 1$ then $\depth R/I^2 = 0$ and if $\depth R/I^2 = 1$ then $\depth R/I^5 = 0$. Other similar results suggest that if $\depth R/I^t = 1$ then $\depth R/I^{t+3} = 0$. This a surprising phenomenon because the depth of a power can determine a smaller depth of another power. Furthermore, we are able to give a simple combinatorial criterion for $\depth R/I^{(t)} = 1$ for $t \gg 1$ and show that the condition $\depth R/I^{(t)} = 1$ is persistent, where $I^{(t)}$ denotes the $t$-th symbolic powers of $I$.
\end{abstract}

\maketitle


\section*{Introduction}

Let $\G$ be a simple graph without isolated vertices on the vertex set $V = \{1,...,n\}$. Let $R = k[x_1,...,x_n]$ be the polynomial ring in $n$ variables over a field $k$. The {\em edge ideal} of $\G$ is the ideal
$$I := \big(x_ix_j|\ \{i,j\} \in \G\big).$$

By the work of Chen, Morey and Sung \cite{CMS}, $\depth R/I^t = 0$ for $t \gg 1$ if and only if $\G$ is a non-bipartite graph \cite{CMS}.  Later, Morales-Bernal, Morey and Villarreal \cite{MMV} showed that if $\depth R/I^t = 0$ then $\depth R/I^{t+1} = 0$. 
Since $\depth R/I \ge 1$, one may ask whether the depth function $\depth R/I^t $ takes the value 1 before reaching 0. 
This question has a negative answer. 
For instance, if $\G$ is a pentagon, $\depth R/I = \depth R/I^2 = 2$ and $\depth R/I^t = 0$ for $t \ge 3$ (see Example 2.8). 

If $\G$ is a connected bipartite graph, we know by the work of Simis, Vasconcelos, and Villarreal \cite{SVV} that 
$I^{(t)} = I^t$ for all $t \ge 1$, where $I^{(t)}$ denotes the $t$-th symbolic powers of $I$. 
Since symbolic powers always have positive depth, $\depth R/I^t \ge 1$ for all $t \ge 1$. Recently, T.N. Trung \cite{TNT} showed that $\depth R/I = 1$ for $t \gg 1$ and if $\depth R/I^t = 1$ then $\depth R/I^{t+1} = 1$.  

The above results are considered major advancements on the depth functions of edge ideals.
Inspired by these results we study the following problems for a connected non-bipartite graph:
\begin{itemize}
\item When does there exists a power $I^t$ with $\depth R/I^t = 1$ or $\depth R/I^{(t)} = 1$?
\item How does the depth function $\depth R/I^t $ or $\depth R/I^{(t)}$ behave after reaching 1?
\end{itemize}

It is well known that $\depth R/I^t = 1$ if and only if $H_\mm^0(R/I^t) = 0$ and $H_\mm^1(R/I^t) \neq 0$, where $H_\mm^i(R/I^t)$ denotes the $i$-th local cohomology modules with respect to the maximal ideal $\mm$ of $R$.
Since there is already a combinatorial criterion for $H_\mm^0(R/I^t) = 0$ in terms of $\G$ \cite{LT}, we will concentrate on the condition $H_\mm^1(R/I^t) \neq 0$. Actually, we study the above problems by replacing the condition $\depth R/I^t = 1$ by $H_\mm^1(R/I^t) \neq 0$.
\smallskip

Concerning the existence of a power $I^t$ with $H_\mm^1(R/I^t) \neq 0$ we obtain the following combinatorial conditions. Let $N[v]$ denote the closed neighborhood of a vertex $v$.
\medskip

\noindent {\bf Theorem \ref{non-dominating1}}.
Let $\G$ be a connected non-bipartite graph which has at least a non-dominating odd cycle.
Assume that there exist an odd cycle $C$ of length $2r+1$ and a vertex $v$ not adjacent to $C$ such that 
the induced subgraph $\G_{V \setminus  N[v]}$ is connected. Then  \par
{\rm (1)} $H_\mm^1(R/I^t) \neq 0$ for $t \ge r+1$ if $C = V \setminus N[v]$, \par
{\rm (2)} $H_\mm^1(R/I^t) \neq 0$ for $t \ge n-|N[v]|-r-1$ if $C \neq V \setminus N[v]$.
\medskip

\noindent {\bf Theorem \ref{dominating}}.
Let $\G$ be a connected non-bipartite graph which has only dominating odd cycles. Then $H_\mm^1(R/I^t) \neq 0$ for some $t \ge 1$ if and only if $\G$ has two disjoint maximal independent sets $F,G$ such that the induced graph $\G_{F \cup G}$ is connected. In this case, $H_\mm^1(R/I^t) \neq 0$ for $t \ge  |F|+|G|$.
\medskip

The above combinatorial conditions for $H_\mm^1(R/I^t) \neq 0$ are rather common. 
For instance, they are satisfied if $\G$ is an odd cycle or the union of an odd cycle with a path meeting the cycle at one end.
 \par

Following the terminology of \cite{HQ}, we say that the condition $H_\mm^1(R/I^t) \neq 0$ is persistent if $H_\mm^1(R/I^t) \neq 0$ implies $H_\mm^1(R/I^{t+1}) \neq 0$. If we have the persistence of $H_\mm^1(R/I^t) \neq 0$, then $\depth R/I^t = 1$ implies $\depth R/I^{t+1} \le 1$, which means that the depth function is non-increasing after reaching the value 1. This would be an evidence for the conjecture of Herzog and Hibi that the depth of powers of a squarefree monomials is a non-increasing function \cite{HH}. Though there are counter-examples \cite{HS,KSS}, this conjecture is still open for edge ideals of graphs.  

According to \cite{TT}, the condition $H_\mm^1(R/I^t) \neq 0$ can be divided in two technical cases (see Proposition \ref{TT} for details).
We will show the persistence of $H_\mm^1(R/I^t) \neq 0$ in one case (Proposition \ref{persistence1}), and we will give a mild condition for this persistence in the other case (Proposition \ref{persistence2}).  
These results solve the problem on the persistence of $H_\mm^1(R/I^t) \neq 0$ in a satisfactory manner.

Concerning the existence of a symbolic power $I^{(t)}$ with $\depth R/I^{(t)} = 1$ we are able to give the following criterion in terms of $\G$ and show that the condition $\depth R/I^{(t)} = 1$ is persistent, thereby solving the problems on symbolic powers completely. \medskip

\noindent{\bf Theorems \ref{symbolic1} and \ref{symbolic2}}. 
Let $\G$ be a connected graph. Then $\depth R/I^{(t)} = 1$ for some $t \ge 1$ if and only if $\G$ has two disjoint maximal independent sets $F,G$ such that the induced graph $\G_{F \cup G}$ is connected. Moreover, 
if $\depth R/I^{(t)} = 1$ then $\depth R/I^{(t+1)} = 1$.
 \medskip 
 
The condition of the above result is obviously satisfied if $\G$ is a bipartite graph. Hence, it is a natural 
extension of the results of T.N. Trung \cite{TNT} to arbitrary graphs.

Finally, we study the problem whether there exists a number $\delta$ independent of $\G$ such that if $H_\mm^1(R/I^t) \neq 0$ then $H_\mm^0(R/I^{t+\delta}) \neq 0$. Consequently, if $\depth R/I^t = 1$ then $\depth R/I^{t+\delta} = 0$.
The following results suggest that $\delta = 3$, which is a surprising phenomenon of the depth functions of edge ideals. As far as we know, this is the first case where the depth of a power determines a smaller depth of another power for a large class of ideals. A similar phenomenon occurs for Stanley-Reisner ideals, when $\depth R/I^t = \dim R/I^t$ for some $t \ge 3$ implies $\depth R/I^t = \dim R/I^t$ for all $t \ge 1$ \cite{MT,TT}.

\medskip

\noindent{\bf Theorem \ref{decrease1}}.
Let $\G$ be a non-bipartite graph.
If $\depth R/I = 1$, then $\depth R/I^2= 0$.
\medskip

\noindent{\bf Theorem \ref{square1}}.
Let $\G$ be a connected non-bipartite graph.
If $\depth R/I^2 = 1$, then $\depth R/I^5 = 0$.
\medskip

\noindent{\bf Theorem \ref{decrease}}.
Let $\G$ be a connected non-bipartite graph. Assume that there exists a vertex $v$ such that $\depth R_v/I_v^t = 0$, where $R_v := k[x_i|\ i \neq v]$ and $I_v := IR[x_v^{-1}]\cap R_v$.  Then $\depth R/I^{t+3} = 0$.
\medskip

Note that the assumption of Theorem \ref{decrease} is a sufficient condition for $H_\mm^1(R/I^t) \neq 0$. It covers the case dealt with in Theorem \ref{non-dominating1}. 


The paper is divided into 4 sections. The first section prepares basic results for the study of the aforementioned problems. The remaining sections deal with each of these problems separately.

We always assume that $\G$ is a simple graph without isolated vertices on the vertex set $V = \{1,...,n\}$.
Unless otherwise specified, we denote by $I$ the edge ideal of $\G$.
For basic concepts in graph theory we refer the reader to \cite{We}. 


\section{Preliminaries}

First we recall the combinatorial criterion for $\depth R/I^t = 0$ in \cite{LT}. Note that $\depth R/I = 0$ means $H_\mm^0(R/I) \neq 0$ or, equivalently, $\mm$ is an associated prime ideal of $I$.

Inspired of the well known notion of ear decomposition, 
we call a sequence of walks in $\G$ without repetition of the vertices except the endpoints a {\it generalized
ear decomposition} of $\G$ if the first walk is closed, the endpoints of each
subsequent walk are the only vertices of that walk belonging to earlier walks, and
the walks pass through all vertices of $\G$. 
Note that a walk without repetition of the vertices except the endpoints is a path or
a cycle or a repetitive edge (a closed walk of length 2). 
Generalized ear decompositions always exist in a connected
graph. In particular, every cycle can be used as the first walk of a generalized ear
decomposition. If the first walk of a generalized ear decomposition is an odd cycle, we call it an odd-beginning generalized ear decomposition. We refer to \cite{LT} for more details.
\par

If every connected component of $\G$ is non-bipartite, we call $\G$ a {\em strongly non-bipartite} graph.
For such a graph $\G$ we denote by $\varphi^*(\G)$ the minimal number of even walks in a family of odd-beginning generalized ear decompositions of each connected component of $\G$ and set
$$\mu^*(\G) : = (\varphi^*(\G)+n-c)/2,$$
where $c$ is the number of the connected components of $\G$.   
For instance, $\mu^*(\G) = t$ if $\G$ is an odd cycle of length $2t+1$.
The notation $\mu^*(\G)$ is inspired by the invariant $\mu(\G)$, which was introduced in coding theory by Sole and  Zaslavsky \cite{SZ} and studied in hypergraph theory by Frank \cite{Fr}.

A set $U \subseteq V$ is called {\em dominating} in $\G$ if every vertex of $V \setminus U$ is adjacent to a vertex of $U$. 
Let $\G_U$ denote the induced subgraph of $\G$ on $U$. 

\begin{Theorem} \label{LT} \cite[Theorem 3.6]{LT}
Let $\G$ be a strongly non-bipartite graph. 
Then $\depth R/I^t = 0$ if and only if there exists a dominating set $U$ of $\G$ such that 
$\G_U$ is strongly non-bipartite with $\mu^*(\G_U) < t$.
\end{Theorem}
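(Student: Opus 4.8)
The plan is to turn the statement $\depth R/I^t=0$ into a purely combinatorial fact about ``packing edges under an exponent vector'', and then to match that fact with the ear–decomposition invariant $\mu^*$ by means of a min‑max identity. Recall that $\depth R/I^t=0$ is equivalent to $\mm\in\Ass(R/I^t)$, i.e.\ to the existence of a monomial $x^\a\notin I^t$ with $x_ix^\a\in I^t$ for every $i\in V$; call such an $\a$ a \emph{socle vector} for $I^t$. Since $I^t$ is generated by the products of $t$ edges, for $\b\in\NN^n$ the largest $s$ with $x^\b\in I^s$ equals the maximum number $\nu_\G(\b)$ of edges of $\G$, counted with multiplicity, whose vectors $\e_i+\e_j$ add up to something $\le\b$ (a weighted matching number). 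Since raising one coordinate of $\b$ can increase $\nu_\G(\b)$ by at most one, $\a$ is a socle vector for $I^t$ precisely when $\nu_\G(\a)=t-1$ and $\nu_\G(\a+\e_i)=\nu_\G(\a)+1$ for every $i\in V$, and then $t$ is forced to equal $\nu_\G(\a)+1$. Both $\nu_\G$ and $\mu^*$ are additive over connected components, which reduces the graph‑theoretic core to the connected case.

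\emph{Step 1 (localisation).} Given a socle vector $\a$ for $I^t$ put $U:=\supp(\a)$ and $H:=\G_U$. If $i\notin U$, then in any way of writing $x_ix^\a$ as a product of $t$ edges times a monomial some edge must be incident to $i$ (otherwise $x^\a\in I^t$), and its other endpoint lies in $\supp(x_ix^\a)\setminus\{i\}=U$; hence $U$ is a dominating set. For $u\in U$ the edges occurring in such an expression of $x_ux^\a$ are supported on $U$, hence lie in $H$, and grouping them by connected component and restricting shows that, for each component $H'$ of $H$, $\a|_{V(H')}$ is a \emph{full‑support} socle vector for $I(H')^{t'}$ for a suitable $t'$, with $\sum_{H'}(t'-1)=\nu_\G(\a)=t-1$. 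If some $H'$ were bipartite, then $I(H')^{t'}=I(H')^{(t')}$ by Simis--Vasconcelos--Villarreal \cite{SVV}, a symbolic power, which has positive depth and hence no socle --- impossible. Thus $H$ is strongly non‑bipartite. Conversely, if $U$ is dominating, $\G_U$ is strongly non‑bipartite, and $\a$ is a full‑support socle vector for $I(\G_U)^{s}$ with $x^\a/x_u\in I(\G_U)^{s-1}$ for every $u\in U$, then $\a$ is already a socle vector for $I^{s}$: for $i\notin U$ pick a neighbour $u\in U$ and write $x_ix^\a=(x_ix_u)(x^\a/x_u)\in I\cdot I(\G_U)^{s-1}\subseteq I^{s}$.

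\emph{Step 2 (the min‑max identity --- the crux).} For a connected strongly non‑bipartite graph $H$ the goal is
$$\mu^*(H)=\min\bigl\{\,\nu_H(\a)\ :\ \a\in\NN^{V(H)},\ \supp(\a)=V(H),\ \nu_H(\a+\e_v)=\nu_H(\a)+1\ \text{for all }v\in V(H)\,\bigr\}.$$
For the inequality $\le$ (and simultaneously the extra condition $x^\a/x_v\in I(H)^{\mu^*(H)}$ needed in Step~1) I would exhibit the vector explicitly: fix an odd‑beginning generalized ear decomposition of $H$ realising $\varphi^*(H)$ even walks and set $\a:=\1_{V(H)}+\sum_{W}\e_{v_W}$, the sum running over the even walks $W$ of the decomposition and $v_W$ being a chosen endpoint of $W$, so that $|\a|=|V(H)|+\varphi^*(H)=2\mu^*(H)+1$. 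The initial odd cycle together with the odd ears assemble into a matching packing $\mu^*(H)$ edges under $\a$, and a degree count shows no $(\mu^*(H)+1)$‑st edge fits, so $\nu_H(\a)=\mu^*(H)$; re‑routing this matching along the ear decomposition after raising or lowering one coordinate gives $\nu_H(\a+\e_v)=\mu^*(H)+1$ and $\nu_H(\a-\e_v)=\mu^*(H)$ for every $v$. The inequality $\ge$ is where the real difficulty lies: one must show that \emph{every} full‑support $\a$ with $\nu_H(\a+\e_v)=\nu_H(\a)+1$ for all $v$ satisfies $\nu_H(\a)\ge\mu^*(H)$. By LP duality an obstruction to packing $\nu_H(\a)+1$ edges under $\a$ is a fractional vertex cover of small $\a$‑weight; the hypothesis on $\a$ forces this cover to be tight at every vertex, and rounding it along an ear decomposition of $H$ yields a count bounded below by $\mu^*(H)$. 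This rounding argument --- exactly the circle of min‑max ideas of Frank \cite{Fr} and Sol\'e--Zaslavsky \cite{SZ} that the definition of $\mu^*$ is designed to capture --- is the step I expect to be the main obstacle.

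\emph{Step 3 (assembling the theorem).} ($\Rightarrow$) If $\depth R/I^t=0$, Step~1 produces a dominating set $U$ with $\G_U$ strongly non‑bipartite and, on each component $H'$, a full‑support socle vector $\a|_{V(H')}$ for $I(H')^{t'}$ with $\sum_{H'}(t'-1)=t-1$. Applying the inequality $\ge$ of Step~2 to each $H'$ and summing gives
$$\mu^*(\G_U)=\sum_{H'}\mu^*(H')\le\sum_{H'}\nu_{H'}\bigl(\a|_{V(H')}\bigr)=\nu_\G(\a)=t-1<t.$$
($\Leftarrow$) Given a dominating set $U$ with $\G_U$ strongly non‑bipartite and $\mu^*(\G_U)<t$, apply the inequality $\le$ of Step~2 componentwise and take the disjoint union to obtain a full‑support socle vector $\a$ for $I(\G_U)^{\mu^*(\G_U)+1}$ with $x^\a/x_u\in I(\G_U)^{\mu^*(\G_U)}$ for every $u\in U$. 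By Step~1, $\a$ is then a socle vector for $I^{\mu^*(\G_U)+1}$, so $\depth R/I^{\mu^*(\G_U)+1}=0$; since $\mu^*(\G_U)+1\le t$, the persistence theorem of Morales‑Bernal--Morey--Villarreal \cite{MMV} gives $\depth R/I^t=0$.
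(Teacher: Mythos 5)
This statement is quoted in the paper from \cite[Theorem 3.6]{LT} and no proof is given here, so your attempt can only be judged on its own terms. Your reduction framework is sound and is in the spirit of the actual proof in \cite{LT}: identifying $\depth R/I^t=0$ with a socle exponent vector $\a$, translating membership in $I^s$ into the weighted matching number $\nu_\G(\a)$, taking $U=\supp(\a)$ and checking it is dominating, ruling out bipartite components of $\G_U$ via \cite{SVV}, and, for the converse, building a socle vector from an odd-beginning generalized ear decomposition and invoking persistence \cite{MMV}. These bookkeeping steps (your Steps 1 and 3) are essentially correct.

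However, there is a genuine gap, and it sits exactly where the theorem's content lies. The forward implication needs the inequality ``$\ge$'' of your Step 2: every full-support vector $\a$ on a connected non-bipartite $H$ with $\nu_H(\a+\e_v)=\nu_H(\a)+1$ for all $v$ satisfies $\nu_H(\a)\ge\mu^*(H)$. You do not prove this; you only gesture at LP duality plus ``rounding a tight fractional vertex cover along an ear decomposition'' and explicitly flag it as the expected main obstacle. That is not an argument: the dual of the weighted matching LP gives a fractional cover, but extracting from it a generalized ear decomposition with few even walks (i.e.\ an upper bound on $\varphi^*(H)$ in terms of $\nu_H(\a)$) is precisely the hard combinatorial min--max content of \cite{LT} (related to, but not identical with, Frank's theory of conservative weightings), and nothing in your sketch shows how the full-support/socle hypotheses on $\a$ enter. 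A secondary gap: in the ``$\le$'' direction you assert, by ``re-routing the matching,'' that the constructed $\a=\1+\sum_W\e_{v_W}$ satisfies $\nu_H(\a+\e_v)=\mu^*(H)+1$ and $\nu_H(\a-\e_v)=\mu^*(H)$ for \emph{every} vertex $v$; this is an exact-packing statement whose truth depends on the parity structure of the ear decomposition and needs an induction on the ears, not just a degree count (and the case $\nu_H(\a-\e_u)=\mu^*(H)$ is also what your Step 1 converse silently relies on to handle vertices outside $U$). Until the lower bound in Step 2 is actually established, the implication $\depth R/I^t=0\Rightarrow\mu^*(\G_U)<t$ is unproved.
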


From this criterion we immediately obtain the following result.

\begin{Corollary} \label{zero} \cite[Corollary 3.4]{CMS} \cite[Theorem 2.13]{MMV}
Let $I$ be the edge ideal of a strongly bipartite graph $\G$. Then $\depth R/I^t = 0$ for some $t \gg 1$.
If $\depth R/I^t = 0$, then $\depth R/I^{t+1} = 0$.
\end{Corollary}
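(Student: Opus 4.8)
The plan is to derive both assertions directly from the combinatorial criterion in Theorem \ref{LT}, exploiting the fact that the condition appearing there becomes easier to satisfy as $t$ grows. No new combinatorial input is needed beyond a suitable choice of dominating set.

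For the existence statement, I would take $U = V$, which is trivially a dominating set of $\G$. Then $\G_U = \G$ is strongly non-bipartite by hypothesis, so $\mu^*(\G)$ is defined; since each connected non-bipartite component admits an odd-beginning generalized ear decomposition (any odd cycle of the component can serve as its first walk), the number $\varphi^*(\G)$, and hence $\mu^*(\G) = (\varphi^*(\G) + n - c)/2$, is a finite nonnegative integer-valued quantity. Choosing any $t > \mu^*(\G)$, Theorem \ref{LT} applied with $U = V$ yields $\depth R/I^t = 0$; in particular this holds for all $t \gg 1$.

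For the persistence statement, suppose $\depth R/I^t = 0$. By Theorem \ref{LT} there is a dominating set $U$ of $\G$ such that $\G_U$ is strongly non-bipartite and $\mu^*(\G_U) < t$. Then automatically $\mu^*(\G_U) < t < t+1$, so the very same set $U$ satisfies the hypothesis of Theorem \ref{LT} with $t$ replaced by $t+1$. Hence $\depth R/I^{t+1} = 0$. The same argument in fact shows $\depth R/I^s = 0$ for every $s \ge t$, and combined with the first part it recovers that the depth function of $I^t$ is eventually identically zero.

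There is essentially no obstacle once Theorem \ref{LT} is in hand: the entire content is the observation that the strict inequality $\mu^*(\G_U) < t$ is preserved under increasing $t$, and that $U = V$ always works as soon as $t$ exceeds the finite quantity $\mu^*(\G)$. The only point worth one line of justification is the finiteness of $\mu^*(\G)$, i.e. that odd-beginning generalized ear decompositions exist in each component — this is recorded in the discussion preceding Theorem \ref{LT}.
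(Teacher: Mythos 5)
Your proof is correct and is exactly the argument the paper intends: the paper offers no explicit proof, stating only that the corollary follows immediately from Theorem \ref{LT}, and your two observations (take $U=V$ for existence, and note that $\mu^*(\G_U)<t$ implies $\mu^*(\G_U)<t+1$ for persistence) are precisely the missing details. Note that you have implicitly (and correctly) read the hypothesis as \emph{strongly non-bipartite}; the word ``non'' is missing in the statement as printed.
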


Let $s(\G)$ denote the least number of $\mu^*(\G_U)$ among all dominating sets $U$ of $\G$ such that $\G_U$ is strongly non-bipartite. By Theorem \ref{LT}, $s(\G)+1$ is the least number $t$ such that $\depth R/I^t = 0$.

\begin{Example} \label{union} 
Let $\G$ be the union of a triangle $C$ and a path $P$ of length $s$ meeting $C$ at only one end; see Figure 1. 
Then $s+1$ is the least number such that $\depth R/I^s = 0$.
In fact, $\G$ has only an odd beginning generalized ear decomposition that consists of $C$ and $s$ edges of $P$ as closed walks of length $2$. From this it follows that $\phi^*(\G) = s$. Since $n = s+3$,
$$\mu^*(\G) = (s + s+3 -1)/2 = s+1.$$
Let $v$ be the end of $P$ not contained in $C$. A dominating sets $U$ of $\G$ with the above property is either $V \setminus \{v\}$ or $V$. Since $\G_{V \setminus \{v\}}$ is the union of $C$ with a path of length $s-1$, $\mu^*(\G_{V \setminus \{v\}}) = s$. Therefore, $s(\G) = s$.
\end{Example}

\begin{figure}[ht!]

\begin{tikzpicture}[scale=0.5] 

\draw [thick] (0,0) coordinate (a) -- (0,2) coordinate (b) ;
\draw [thick] (0,2) coordinate (b) -- (1.5,1) coordinate (c) ;
\draw [thick] (1.5,1) coordinate (c) -- (0,0) coordinate (a) ; 
\draw [thick] (1.5,1) coordinate (c) -- (3.5,1) coordinate (d);
\draw [thick] (3.5,1) coordinate (d) -- (5.5,1) coordinate (e);
\draw [thick] (5.5,1) coordinate (e) -- (7.5,1) coordinate (f);
\draw [thick] (7.5,1) coordinate (f) -- (9.5,1) coordinate (g);

\fill (a) circle (3pt);
\fill (b) circle (3pt);
\fill (c) circle (3pt);
\fill (d) circle (3pt);
\fill (e) circle (3pt);
\fill (f) circle (3pt);
\fill (g) circle (3pt);

\draw (5.5,2) node{$s=4$};
 
\end{tikzpicture}
\caption{}
\end{figure}

To estimate the $\mu^*$-invariant of strongly non-bipartite induced graphs we may use the following technique.
For a vertex $v \in V$, we set $U+v = U \cup \{v\}$.

\begin{Lemma} \label{extension}
Let $U$ be a subset of $V$ such that $\G_U$ is a strongly non-bipartite graph and $v$ an adjacent vertex of $U$. Then $\G_{U+v}$ is a strongly non-bipartite graph with 
$\mu^*(\G_{U+v}) \le \mu^*(\G_U)+1.$
\end{Lemma}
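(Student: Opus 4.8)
The plan is to reduce the claim to a statement about an odd-beginning generalized ear decomposition of $\G_{U+v}$ built from one of $\G_U$ that is optimal, i.e.\ realizes $\varphi^*(\G_U)$. First I would work component by component. Since $v$ is adjacent to $U$, it attaches to exactly one connected component of $\G_U$; on every other component nothing changes, so it suffices to treat the case where $\G_U$ is connected and $v$ has a neighbor $u \in U$. Then $\G_{U+v}$ is connected, and it is still strongly non-bipartite because it contains $\G_U$ as a subgraph on the vertex set $U$, which already carries an odd cycle in each (here, the only) component; adding a vertex and some edges cannot destroy non-bipartiteness. So the only real content is the inequality $\mu^*(\G_{U+v}) \le \mu^*(\G_U)+1$.

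Next I would unwind the definition. Writing $n_U = |U|$, we have $\mu^*(\G_U) = (\varphi^*(\G_U) + n_U - 1)/2$ and $\mu^*(\G_{U+v}) = (\varphi^*(\G_{U+v}) + n_U + 1 - 1)/2 = (\varphi^*(\G_{U+v}) + n_U)/2$. Hence the desired bound $\mu^*(\G_{U+v}) \le \mu^*(\G_U) + 1$ is equivalent to $\varphi^*(\G_{U+v}) \le \varphi^*(\G_U) + 1$. To prove this, take an odd-beginning generalized ear decomposition $\mathcal{D}$ of $\G_U$ with exactly $\varphi^*(\G_U)$ even walks. I would then append one more walk to cover the new vertex $v$: namely the repetitive edge $\{u,v\}$ traversed as a closed walk of length $2$ (i.e.\ the walk $u, v, u$). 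Its endpoints coincide at $u$, which belongs to an earlier walk, and $v$ is the unique new vertex, so this is a legitimate ear in the sense of a generalized ear decomposition, and the resulting sequence passes through all vertices of $\G_{U+v}$. This added walk has length $2$, hence is an even walk, so the new decomposition has $\varphi^*(\G_U)+1$ even walks, giving $\varphi^*(\G_{U+v}) \le \varphi^*(\G_U)+1$ and the claim.

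The step that needs the most care is checking that appending the length-$2$ closed walk $u,v,u$ genuinely yields a valid \emph{generalized} ear decomposition as defined in the paper: the first walk must remain closed (unchanged, so fine), each subsequent walk may share with earlier walks only its endpoints, and a "walk without repetition of the vertices except the endpoints" is allowed to be a repetitive edge. The repetitive edge $u,v,u$ has its two endpoints equal to $u$ and no internal vertices besides $v$, and $v$ appears in no earlier walk, so the non-repetition-except-endpoints condition and the overlap condition are both satisfied; this is precisely the type of walk the paper already uses in Example~\ref{union}. One should also note that if $\G_U$ is disconnected, the definition of $\varphi^*$ sums the minimal even-walk counts over components, and since $v$ joins only one component the same argument applied to that component gives the bound for the whole graph. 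No optimization over decompositions of $\G_{U+v}$ is needed beyond exhibiting this one; the inequality is one-directional, which is exactly what we want.
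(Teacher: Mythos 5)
This is essentially the paper's proof: take an optimal odd-beginning generalized ear decomposition of $\G_U$ and append the repetitive edge $\{u,v\}$ as an even closed walk of length $2$; the only cosmetic difference is that the paper also appends the remaining edges joining $v$ to $U$ as walks of length $1$, which are odd and so do not change the number of even walks (and are not even needed if, as the definition here states, the walks need only pass through all vertices). One small correction: your assertion that $v$ ``attaches to exactly one connected component of $\G_U$'' is not automatic --- $v$ may be adjacent to several components, and in that case the appended sequence is not literally a generalized ear decomposition of the merged component, since the initial closed walk of every component other than the one you start from has no endpoint in earlier walks; the paper's own proof is silent on exactly this point and shares the same gap, so your argument is on par with the original, but you should either drop that claim or record the standing assumption that $v$ meets a single component of $\G_U$ (which is how the lemma is used, e.g., in Lemma \ref{triangle} and Theorem \ref{non-dominating1}).
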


\begin{proof} 
Let $s = \varphi^*(\G_U)$. Then there exists an odd beginning generalized ear decomposition $\E$ of $\G_U$ such that $s$ is the number of even walks in $\E$.  
Let $u \in U$ be an adjacent vertex of $v$. Consider $\{u,v\}$ as a closed walk of length 2 and 
any other edge which connect a vertex of $U$ with $v$ as a walk of length 1.
If we add $\{u,v\}$ first and then the other edges in any order to $\E$, we obtain an odd beginning generalized ear decomposition $\F$ of $\G_{U+v}$. It is obvious that $s+1$ is the number of even walks in $\F$ and $|U+v| = |U|+1$. Using these facts we deduce from the definition of the $\mu^*$-invariant  that $\mu^*(\G_{U+v}) \le \mu^*(\G_U)+1.$
\end{proof}

For convenience, we say that a subgraph of $\G$ is dominating if the vertex set of the subgraph is dominating in $\G$. Moreover, we sometimes identify a subgraph with their vertex set.

\begin{Lemma} \label{triangle} 
Assume that $\G$ has a triangle $C$ and a path $P$ of length $s$ meeting $C$ at only one end such that the graph $C \cup P$ is dominating in $\G$. Then $\depth R/I^t = 0$ for $t \ge s+2$.
\end{Lemma}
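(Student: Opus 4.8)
The plan is to feed the dominating set $U := V(C\cup P)$ into Theorem \ref{LT}. By hypothesis $U$ is dominating in $\G$, so once we know that $\G_U$ is strongly non-bipartite and that $\mu^*(\G_U)\le s+1$, we are done: for every $t\ge s+2$ we then have $\mu^*(\G_U)\le s+1<t$, and Theorem \ref{LT} gives $\depth R/I^t=0$. Thus the whole problem reduces to bounding $\mu^*(\G_U)$.

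To estimate $\mu^*(\G_U)$ I would build $\G_U$ up from the triangle by adjoining the vertices of $P$ one at a time and apply Lemma \ref{extension} at each step. Write $P$ as $v_0,v_1,\dots,v_s$ with $v_0$ the common vertex of $C$ and $P$ and $v_1,\dots,v_s\notin V(C)$, and set $U_0:=V(C)$ and $U_i:=U_{i-1}+v_i$ for $1\le i\le s$, so that $U_s=U$. Since $V(C)$ has three vertices and contains the odd cycle $C$, the induced graph $\G_{U_0}$ is the triangle $C$ itself, a strongly non-bipartite graph with $\mu^*(\G_{U_0})=1$. For each $i$ the vertex $v_i$ is adjacent to $v_{i-1}\in U_{i-1}$, so Lemma \ref{extension} applies and yields that $\G_{U_i}$ is again strongly non-bipartite with $\mu^*(\G_{U_i})\le\mu^*(\G_{U_{i-1}})+1$. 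Iterating $s$ times gives that $\G_U=\G_{U_s}$ is strongly non-bipartite and
$$\mu^*(\G_U)\le\mu^*(\G_{U_0})+s=s+1,$$
which is exactly the bound needed above. (Alternatively one can avoid the induction: any odd-beginning generalized ear decomposition of the abstract graph $C\cup P$ is also one of $\G_U$, since $\G_U$ has the same vertex set $U$ and contains all edges of $C\cup P$; hence $\mu^*(\G_U)\le\mu^*(C\cup P)=s+1$, the last value being computed precisely as in Example \ref{union}.)

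I do not anticipate a real obstacle here: this is essentially a direct application of Theorem \ref{LT} once one guesses the right dominating set. The only point that needs a little care is that Lemma \ref{extension} must be applied along $P$ in order, so that each newly adjoined vertex is adjacent to a vertex already chosen — this is what forces us to start from $C$ and move outward along $P$. It is also worth noting that the bound $t\ge s+2$, rather than the sharper $t\ge s+1$ available in the special case $\G=C\cup P$ of Example \ref{union}, reflects the generality of the hypothesis: here we have no control over $\G$ outside $C\cup P$, so we cannot replace $U$ by the smaller set obtained by deleting the far endpoint of $P$ (it need no longer be dominating), and we are left with $\mu^*(\G_U)\le s+1$.
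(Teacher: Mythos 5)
Your proposal is correct and follows essentially the same route as the paper: one applies Lemma \ref{extension} repeatedly along the path, starting from the triangle with $\mu^*(\G_C)=1$, to get $\mu^*(\G_{C\cup P})\le s+1$, and then invokes Theorem \ref{LT} with the dominating set $C\cup P$. Your iterative bookkeeping with the sets $U_i$ just makes explicit the induction the paper states in one line.
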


\begin{proof} 
Let $v_0,v_1,...,v_s$ be the vertices of $P$ such that $\{x_i,x_{i+1}\} \in P$, $v_0 \in C$. By Proposition \ref{extension},
$$\mu^*(\G_{C + x_1+\cdots+x_{i+1}}) \le \mu^*(\G_{C + x_1+\cdots+x_i}) +1.$$
Since $\mu^*(\G_C) = \mu^*(C) = 1$, this implies $\mu^*(\G_{C \cup P}) \le s+1$. 
Therefore, the conclusion follows from Theorem \ref{LT}.
\end{proof}

Let $I$ be a monomial ideal. Then $R/I$ has a natural $\NN^n$-graded structure.
Takayama \cite{Ta} showed that the $\ZZ^n$-graded components of the local cohomology modules of $R/I$ can be described in terms of certain complexes, which are defined as follows.
 
For every vector $\a = (a_1,...,a_n) \in \NN^n$, let $x^\a = x_1^{a_1}\cdots x_1^{a_n}$ and
$$\D_\a(I)  := \{F \subseteq V|\ x^\a \not\in IR[x_i^{-1}|\ i \in F]\}.$$
Then $\D_\a(I)$ is a simplicial complex, which we call a {\em degree complex} of $I$ because the
$\a$-graded component of the local cohomology modules of $R/I$ depends on the reduced cohomology of $\D_\a(I)$. The above definition of $\D_\a(I)$ is due to \cite[Lemma 1.2]{MT}, which is simpler than the original definition in \cite{Ta}. \par

We have the following technical criterion for $H_\mm^1(R/I) \neq 0$. 
For every vertex $v \in V$ let $R_v := k[x_i|\ i \neq v]$ and $I_v := IR[x_v^{-1}]\cap R_v$.

\begin{Proposition}  \label{TT} \cite[Proposition 1.6]{TT}
$H_\mm^1(R/I) \neq 0$ if and only if one of the following conditions are satisfied:\par
{\rm (1)} $\depth R_v/I_v  =  0$ for some $v \in V$.\par
{\rm (2)} $\D_\a(I)$ is disconnected for some $\a \in \NN^n$. 
\end{Proposition}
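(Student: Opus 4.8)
The plan is to decide whether $H_\mm^1(R/I) \neq 0$ by inspecting the $\ZZ^n$-graded components of the local cohomology via Takayama's formula \cite{Ta} (in the streamlined form of \cite{MT} recalled above). Recall that formula: for $\a \in \ZZ^n$, put $G_\a := \{i \mid a_i < 0\}$; then $H_\mm^i(R/I)_\a = 0$ unless $G_\a$ is a face of the Stanley--Reisner complex of $\sqrt I$, and otherwise $H_\mm^i(R/I)_\a \cong \tilde H^{\, i-|G_\a|-1}(\D_\a(I); k)$, where for $\a \in \NN^n$ the complex $\D_\a(I)$ is exactly the degree complex of the statement and in general it is the analogous complex on $V \setminus G_\a$. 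Since every degree vector $\a$ either lies in $\NN^n$ or has $G_\a \neq \emptyset$, it is enough to determine, separately for these two families, when some $\a$ in the family satisfies $H_\mm^1(R/I)_\a \neq 0$; the disjunction of the two answers will be exactly ``(1) or (2)''.

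First I would handle the vectors $\a \in \NN^n$. Here $G_\a = \emptyset$, so $H_\mm^1(R/I)_\a \cong \tilde H^0(\D_\a(I); k)$, which is nonzero precisely when $\D_\a(I)$ has at least two connected components. Hence some $\a \in \NN^n$ gives $H_\mm^1(R/I)_\a \neq 0$ if and only if $\D_\a(I)$ is disconnected for some $\a \in \NN^n$; this is condition (2), in both directions.

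Next I would handle the vectors $\a$ with $G_\a \neq \emptyset$. For such $\a$ the relevant group sits in cohomological degree $-|G_\a|$, and reduced cohomology in degrees $\le -2$ vanishes, so the only degrees that can contribute have $G_\a = \{v\}$ a single vertex. To analyze these I would invert $x_v$. Writing $R[x_v^{-1}] = R_v[x_v, x_v^{-1}]$ one has $IR[x_v^{-1}] = I_v R[x_v^{-1}]$, hence $(R/I)[x_v^{-1}] \cong (R_v/I_v)[x_v, x_v^{-1}]$. Let $H_{(x_v)}^1(R/I)$ be the cokernel of the localization map $R/I \to (R/I)[x_v^{-1}]$; since $R/I$ is concentrated in nonnegative $x_v$-degrees, the part of $H_{(x_v)}^1(R/I)$ in negative $x_v$-degrees equals the negative-$x_v$-degree part of $(R_v/I_v)[x_v, x_v^{-1}]$, which is a direct sum of shifted copies of $R_v/I_v$ as an $R_v$-module. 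Plugging this, together with $H_{(x_v)}^i(R/I) = 0$ for $i \ge 2$ and the fact that the $q=0$ row involves only $H^0_{(x_v)}(R/I) \subseteq R/I$, which carries no negative $x_v$-degree, into the change-of-rings spectral sequence $H^p_{\mm_v R}\!\big(H^q_{(x_v)}(R/I)\big) \Rightarrow H^{p+q}_\mm(R/I)$ with $\mm_v := (x_i \mid i \neq v)R_v$, and using that $\mm_v$-local cohomology commutes with inverting $x_v$, one identifies the negative-$x_v$-degree part of $H_\mm^1(R/I)$ with the negative-$x_v$-degree part of $H^0_{\mm_v}(R_v/I_v) \otimes_k k[x_v, x_v^{-1}]$. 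Therefore some $\a$ with $a_v < 0$ has $H_\mm^1(R/I)_\a \neq 0$ if and only if $H^0_{\mm_v}(R_v/I_v) \neq 0$, i.e. $\depth R_v/I_v = 0$; this is condition (1). Combining the two cases yields the equivalence.

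I expect the crux to be the second case: correctly pinning down which invariant of $R_v/I_v$ governs $H_\mm^1(R/I)$ in negative $x_v$-degrees, and in particular keeping track of the cohomological shift by $|G_\a|$ and of the conventions for reduced cohomology in degree $-1$ --- $\tilde H^{-1}$ of the void complex is $0$, whereas $\tilde H^{-1}$ of the one-vertex complex $\{\emptyset\}$ is $k$ --- since it is exactly this distinction that separates a contribution to $H_\mm^0$ from one to $H_\mm^1$ in the boundary situations. The first case is essentially immediate once Takayama's formula and the interpretation of $\tilde H^0$ are on the table, and the ``if'' directions of both cases come out of the same computations read backwards.
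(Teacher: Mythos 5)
Your argument is correct, but there is nothing in the paper to compare it with: the proposition is quoted verbatim from Terai--Trung \cite[Proposition 1.6]{TT} and is not proved here. Measured against the proof in \cite{TT}, your treatment of the degrees $\a \in \NN^n$ is the same (Takayama's formula plus the fact that $\widetilde H^0(\D_\a(I);k) \neq 0$ exactly when $\D_\a(I)$ has at least two connected components), and your reduction to $|G_\a| \le 1$ via the vanishing of reduced cohomology in degrees $\le -2$ is also standard. Where you genuinely diverge is in the degrees with $G_\a = \{v\}$: Terai--Trung read the condition off combinatorially, using that $\widetilde H^{-1}(\D_\a(I)) \neq 0$ iff $\D_\a(I) = \{\emptyset\}$, which unwinds to saying that the image of $x^\a$ lies in the saturation of $I_v$ but not in $I_v$, i.e.\ $H^0_{\mm_v}(R_v/I_v) \neq 0$; you instead identify the whole negative-$x_v$-degree part of $H^1_\mm(R/I)$ with $\bigoplus_{j<0} H^0_{\mm_v}(R_v/I_v)\,x_v^j$ via the two-row composite-functor spectral sequence for $\Gamma_\mm = \Gamma_{(x_i \mid i \neq v)}\circ\Gamma_{(x_v)}$. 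That computation is sound (the $q=0$ row does vanish in negative $x_v$-degrees, and $(R/I)[x_v^{-1}] \cong (R_v/I_v)[x_v^{\pm1}]$ for a monomial ideal), and it buys you independence from the precise definition of $\D_\a(I)$ for $\a \notin \NN^n$ and from the degree-$(-1)$ cohomology conventions you rightly identify as the delicate point; the price is invoking more homological machinery than the one-line combinatorial identification in \cite{TT}. Either route gives a complete proof.
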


Now we will explain how to check the conditions (1) and (2)  for the ideal $I^t$, where $I$ is the edge ideal of a graph $\G$. 
Let $N(v)$ denote the set of all vertices adjacent to a vertex $v$ and $N[v] = N(v) + v$, which is called  the closed neighborhood of $v$.

\begin{Lemma} \label{reduction}
Let $J$ be the edge ideal of $\G_{V \setminus N[v]}$ in $S := k[x_i|\ i \not\in N[v]]$.
Then $\depth R_v/I_v^t  =  0$ if and only if $\depth S/J^t  =  0$.
\end{Lemma}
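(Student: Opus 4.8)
The plan is to unwind the definitions of $R_v$, $I_v$, $S$ and $J$ and exhibit an explicit correspondence between the two localizations. Recall that $R_v = k[x_i \mid i \neq v]$ and $I_v = I R[x_v^{-1}] \cap R_v$. First I would describe $I_v^t$ generator-wise. A generator of $I^t$ is a product of $t$ edge-monomials $x_{a_1}x_{b_1}\cdots x_{a_t}x_{b_t}$ with $\{a_j,b_j\} \in \G$; after inverting $x_v$ and intersecting back with $R_v$, each occurrence of the variable $x_v$ in such a product gets deleted. An edge $\{a,b\} \in \G$ either avoids $v$, in which case $x_ax_b$ survives intact, or is of the form $\{v,w\}$ with $w \in N(v)$, in which case it contributes just $x_w$. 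Hence $I_v$ is generated by the monomials $x_ax_b$ for edges $\{a,b\}$ of $\G_{V\setminus v}$ together with the linear forms $x_w$ for $w \in N(v)$; equivalently $I_v = (x_w \mid w \in N(v)) + J R_v$, where $J$ is the edge ideal of $\G_{V \setminus N[v]}$ viewed in $S = k[x_i \mid i \notin N[v]]$ and $JR_v$ its extension to $R_v = S[x_w \mid w \in N(v)]$.

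The key step is then the identification $R_v / I_v^t \;\cong\; \big(S/J^t\big)[x_w \mid w \in N(v)] \big/ (\text{something})$; more precisely, because the linear forms $x_w$ ($w \in N(v)$) already lie in $I_v$, one checks that modulo $I_v^t$ every generator of $I_v^t$ that involves some $x_w$ is redundant once we also have $(x_w \mid w\in N(v))$, so that $I_v^t = (x_w \mid w \in N(v))^{[\le t]} + \cdots$ — here I must be a little careful, since $(x_w)^t$-type terms do enter $I_v^t$. The cleanest formulation: set $\mathfrak{q} = (x_w \mid w \in N(v)) R_v$. One shows $R_v/I_v^t$ and $S/J^t$ have the same depth by passing through the quotient $R_v/(I_v^t + \mathfrak{q}) \cong S/(J^t + \mathfrak{q}\cap S) = S/J^t$ and observing that $\depth R_v/I_v^t = 0$ iff $\mm_v := (x_i \mid i \neq v)$ is an associated prime of $I_v^t$, iff (since any embedded/associated prime containing the monomial ideal $I_v^t$ must in particular be accounted for after we note $\mathfrak{q} \subseteq I_v^t$ forces every associated prime to contain $\mathfrak{q}$) the maximal ideal $\mm_S = (x_i \mid i \notin N[v])$ of $S$ is an associated prime of $J^t$, i.e. $\depth S/J^t = 0$.

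So the argument reduces to the observation that $\mathfrak{q} = (x_w \mid w \in N(v)) \subseteq I_v$ (hence $\subseteq I_v^t$ for, say, $t \ge 1$ after taking $t$-th powers — one needs $\mathfrak{q}^{[t]} \subseteq I_v^t$, which holds since $x_w^t = (x_w)^t$ and $x_w \in I_v$ gives $x_w^t \in I_v^t$, and more generally any monomial in $I_v^t$ divisible by some $x_w$ can be rewritten), combined with: an $\NN^n$-graded module's depth is $0$ exactly when its degree-$\a$ local cohomology $H^0_\mm$ is nonzero for some $\a$, and the standard fact that $\Ass(I_v^t)$ relates to $\Ass(J^t)$ by the substitution $x_w \mapsto 0$. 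The main obstacle I anticipate is precisely the bookkeeping around the powers: showing that the linear forms $x_w$, $w\in N(v)$, behave well enough under taking $t$-th powers that $\mm_v \in \Ass(I_v^t)$ really is equivalent to $\mm_S \in \Ass(J^t)$ — in particular that no ``mixed'' associated prime of $I_v^t$ (one using some but not all of the $x_w$) can account for depth $0$ without the corresponding statement already holding for $J^t$ in $S$. I expect this to follow by localizing at $\mm_v$ and using that, after inverting everything outside $N[v]\setminus v$ is irrelevant, the ideal $I_v^t$ localized looks like $J^t$ times a power of $\mathfrak{q}$, for which the associated prime computation is routine; but getting this clean is the part that needs the care, and is presumably why the authors isolate it as a lemma.
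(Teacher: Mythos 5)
Your opening reduction is sound and matches the paper's starting point: $\depth R_v/I_v^t=0$ iff the maximal homogeneous ideal of $R_v$ is associated to $I_v^t$, and $I_v=(x_w\mid w\in N(v))+JR_v$ because inverting $x_v$ turns each edge $\{v,w\}$ into the linear form $x_w$. But the core step is left as an acknowledged gap, and the gap is real. Writing $\qq=(x_w\mid w\in N(v))R_v$, you have $I_v^t=(\qq+JR_v)^t=\sum_{i+j=t}\qq^i(JR_v)^j$, so for $t\ge 2$ it is false that $\qq\subseteq I_v^t$ (only $\qq^t\subseteq I_v^t$), and the quotient $R_v/(I_v^t+\qq)\cong S/J^t$ tells you nothing about $\depth R_v/I_v^t$, since passing to a quotient by extra linear forms does not preserve depth. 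The ``routine'' associated-prime computation you defer is precisely the content of the lemma, and what it actually yields (this is the cited result of H\`a and Morey, \cite[Lemma 3.4]{HM}) is: $\mm_v\in\Ass(I_v^t)$ if and only if $\mm_S\in\Ass(J^s)$ for \emph{some} $s\le t$. The filtration of $R_v/I_v^t$ coming from the powers of $\qq$ has graded pieces governed by $S/J^j$ for all $j\le t$, so a witness for $\mm_S\in\Ass(J^s)$ with $s<t$ can force $\mm_v\in\Ass(I_v^t)$ even if $\mm_S\notin\Ass(J^t)$.

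To close the gap one must therefore rule out exactly this ``mixed'' scenario, and that cannot be done formally: for a general monomial ideal $J$ the statement you are trying to prove directly is false. The paper's proof finishes by invoking Corollary \ref{zero} (Mart\'{\i}nez-Bernal--Morey--Villarreal): for edge ideals, $\depth S/J^s=0$ implies $\depth S/J^{s+1}=0$, so ``$\depth S/J^s=0$ for some $s\le t$'' is equivalent to ``$\depth S/J^t=0$''. This persistence theorem is a nontrivial, edge-ideal-specific input that your argument never uses and would need in order to be correct. So the proposal has the right setup but is missing the two essential ingredients: the precise form of the H\`a--Morey correspondence (with the quantifier ``for some $s\le t$'') and the persistence of depth zero for powers of edge ideals.
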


\begin{proof} 
Let $\pp$ be the ideal of $R$ generated by the variables $x_i$, $i \neq v$. Then $\pp R_v$ is the maximal homogeneous ideal of $R_v$. We have $\depth R_v/I_v^t  =  0$ if and only if $\pp R_v$ is an associated prime of $I_v^t$. Since $I_v$ is a localization of $I$, this condition is satisfied if and only if  $\pp$ is an associated prime of $I^t$. By \cite[Lemma 3.4]{HM}, $\pp$ is an associated prime of $I^t$ if and only if $\depth S/J^s  =  0$ for some $s \le t$. By Corollary \ref{zero}, $\depth S/J^s  =  0$ implies $\depth S/J^t  =  0$.
\end{proof}

If $\depth S/J^t  =  0$, $\G_{V \setminus N[v]}$ must be a strongly non-bipartite graph. In this case, we can use Theorem \ref{LT} to check when $\depth S/J^t  =  0$.
\par

For a subset $F \subseteq V$, we denote by $P_F$ the ideal $(x_i|\ i \not\in F)$ of $R$.

\begin{Lemma} \label{facet1}
$P_F$ is an associated prime of $I^t$ if $F$ is a facet of $\D_\a(I^t)$.
\end{Lemma}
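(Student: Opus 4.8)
The plan is to produce a monomial $x^\b \notin I^t$ with $(I^t : x^\b) = P_F$; since $x^\b \notin I^t$, this exhibits $P_F$ as an associated prime of $I^t$. Write $L = I^t$ and let $\M$ be the finite set of minimal monomial generators of $L$. Unwinding the definition of the degree complex, $x^\c \notin LR[x_i^{-1}\mid i\in G]$ holds iff no $m \in \M$ has $m_i \le c_i$ for every $i \notin G$; hence a set $G$ lies in $\D_\a(L)$ iff no $m \in \M$ has $m_i \le a_i$ for every $i \notin G$. Consequently the hypothesis that $F$ is a facet of $\D_\a(L)$ (a maximal face; here $F \ne V$ because $\M \ne \emptyset$ forces $V \notin \D_\a(L)$) translates into two conditions: (i) no $m \in \M$ satisfies $m_j \le a_j$ for all $j \notin F$, and (ii) for every $j \notin F$ there is $m \in \M$ with $m_i \le a_i$ for all $i \in V \setminus (F\cup\{j\})$.

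First I would extract numerical data from (i) and (ii). For each $j \notin F$ let $N_j$ be the smallest $j$-th exponent occurring among the generators $m \in \M$ that witness (ii) for that $j$; condition (i) forces $N_j \ge a_j + 1$. Put $D := \max\{m_i \mid m\in\M,\ i\in F\}$, with $D = 0$ if $F = \emptyset$. Now let $\Sigma$ be the finite set of $\c \in \NN^n$ with $c_i = D$ for $i\in F$, with $a_j \le c_j \le N_j - 1$ for $j \notin F$, and with $x^\c \notin L$. The vector $\c^0$ defined by $c^0_i = D$ for $i\in F$ and $c^0_j = a_j$ for $j\notin F$ belongs to $\Sigma$, since a generator dividing $x^{\c^0}$ would have $m_j \le a_j$ for all $j\notin F$, contradicting (i). Hence $\Sigma \ne \emptyset$, and I choose $\b$ maximal in $\Sigma$ for the coordinatewise order.

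The remaining work is to check $(L : x^\b) = P_F$. For $(L:x^\b) \subseteq P_F$: if a monomial $w$ supported on $F$ satisfied $wx^\b \in L$, then a generator dividing $wx^\b$ would have all $F$-exponents $\le D = b_i$ and, as $w$ only touches $F$, all other exponents $\le b_j$, hence would divide $x^\b$, contradicting $x^\b \notin L$. For $P_F \subseteq (L:x^\b)$, fix $j \notin F$: if $b_j < N_j - 1$ then $\b + \e_j$ still obeys the box constraints of $\Sigma$ but, being strictly larger than the maximal element $\b$, is not in $\Sigma$, which forces $x_j x^\b = x^{\b+\e_j} \in L$; if $b_j = N_j - 1$ then the generator witnessing (ii) for $j$ with $j$-th exponent $N_j$ divides $x_j x^\b$ coordinatewise (its $j$-th exponent equals $N_j = b_j + 1$; its exponents at indices $i \in V\setminus(F\cup\{j\})$ are $\le a_i \le b_i$; its $F$-exponents are $\le D = b_i$), so again $x_j x^\b \in L$. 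Combining the two inclusions gives $(L:x^\b) = P_F$, so $P_F \in \Ass(R/I^t)$.

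The step I expect to be the main obstacle is precisely the choice of $\b$: the naive candidate $x^\a$ (even after enlarging its coordinates on $F$) typically fails, because the generator witnessing $F\cup\{j\}\notin\D_\a(L)$ can have $j$-th exponent far above $a_j+1$, so $x_j x^\a \notin L$. The remedy is to climb coordinatewise to a maximal point of the finite box bounded by the \emph{minimal} such exponents $N_j$; it is exactly the minimality built into $N_j$ that makes the witnessing generator divide $x_j x^\b$ at the moment $\b$ reaches the top of the box in direction $j$.
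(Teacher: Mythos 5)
Your proof is correct: the translation of ``$F$ is a facet of $\D_\a(I^t)$'' into conditions (i) and (ii) on the minimal generators is exactly the right unwinding of the definition, the box $\Sigma$ is finite and nonempty, and the combination of the maximality of $\b$ (in the directions where $b_j<N_j-1$) with the minimality built into $N_j$ (when $b_j=N_j-1$) does give $(I^t:x^\b)=P_F$ with $x^\b\notin I^t$, hence $P_F\in\Ass(R/I^t)$. The route is, however, genuinely different from the paper's. The paper passes to the contraction $I_F:=IR[x_i^{-1}\mid i\in F]\cap k[x_i\mid i\notin F]$ and disposes of the lemma by two citations: the facet condition means $x^\a\in\widetilde{I_F^t}\setminus I_F^t$ by \cite[Lemma 1.3]{TT}, and this forces $P_F\in\Ass(I^t)$ by \cite[Lemma 1.2]{HLT}. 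Your argument is a self-contained elementary replacement for both citations: it nowhere uses that $I^t$ is a power of an edge ideal, so it actually proves the statement for an arbitrary monomial ideal, and it exhibits an explicit witness monomial $x^\b$ whose colon is $P_F$, which the citation-based proof does not produce. Conceptually the two are close---freezing the $F$-coordinates at the uniform bound $D$ and recording only the exponents $N_j$ outside $F$ is exactly working in the localization $R[x_i^{-1}\mid i\in F]$, where the facet condition says $x^\a$ lies in the saturation of $I_F^t$ but not in $I_F^t$---so what your version buys is independence from \cite{TT} and \cite{HLT}, extra generality, and a constructive witness, at the cost of length.
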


\begin{proof}
Let $I_F := IR[x_i^{-1}|\ i \in F] \cap k[x_i|\ i \not\in F]$. If $F$ is a facet of $\D_\a(I^t)$,
$x^\a \in \widetilde{I_F^t} \setminus I_F^t$, 
where $\widetilde{I_F^t}$ is the saturation of $I_F^t$ \cite[Lemma 1.3]{TT}.
From this it follows that $P_F$ is an associated prime of $I^t$ \cite[Lemma 1.2]{HLT}. 
\end{proof}

It is obvious that $I \subseteq P_F$ if and only if $F$ is an independent set of $\G$.
Let $\max(\G)$ denote the set of the maximal independent sets of $\G$. Then
$$I^{(t)} = \bigcap_{F \in \max(\G)} P_F^t.$$

\begin{Lemma} \label{facet2}
A set $F \subset V$ is a facet of $\D_\a(I^{(t)})$ if and only if 
$F$ is a maximal independent set of $\G$ and $\sum_{i \not\in F}a_i < t$.
\end{Lemma}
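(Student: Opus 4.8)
The plan is to unwind the definition of the degree complex by means of the decomposition $I^{(t)} = \bigcap_{G \in \max(\G)} P_G^t$ recorded just above, reducing everything to the (transparent) degree complexes of the individual ideals $P_G^t$.

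The first step is the formal observation that forming degree complexes turns intersections into unions. For monomial ideals $J_1,\dots,J_m$ and any $F \subseteq V$, localization at the variables $x_i$ with $i \in F$ commutes with the finite intersection, so $x^\a \notin \big(\bigcap_\ell J_\ell\big)R[x_i^{-1}\mid i\in F]$ if and only if $x^\a \notin J_\ell R[x_i^{-1}\mid i\in F]$ for some $\ell$; hence $\D_\a(\bigcap_\ell J_\ell) = \bigcup_\ell \D_\a(J_\ell)$. Applying this to the finite family $\{P_G^t : G \in \max(\G)\}$ gives $\D_\a(I^{(t)}) = \bigcup_{G\in\max(\G)}\D_\a(P_G^t)$.

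The second step is to compute each $\D_\a(P_G^t)$ directly. In $R[x_i^{-1}\mid i\in F]$ the monomial $x^\a$ equals a unit times $\prod_{i\notin F}x_i^{a_i}$. If $F \not\subseteq G$, then the generator $x_i$ of $P_G$ with $i \in F\setminus G$ becomes a unit after localization, so $P_G^tR[x_i^{-1}\mid i\in F]$ is the whole ring and $F\notin\D_\a(P_G^t)$. If $F\subseteq G$, then every $i\notin G$ satisfies $i\notin F$, so $P_G = (x_i\mid i\notin G)$ involves only non-inverted variables, and one checks that $x^\a \in P_G^tR[x_i^{-1}\mid i\in F]$ exactly when $\prod_{i\notin F}x_i^{a_i}$ is divisible by some degree-$t$ monomial supported on $\{i : i\notin G\}$, i.e.\ exactly when $\sum_{i\notin G}a_i \ge t$. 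Hence $\D_\a(P_G^t)$ is the full simplex of all subsets of $G$ if $\sum_{i\notin G}a_i < t$, and is the void complex (no faces, since then $x^\a\in P_G^t$) otherwise.

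Combining the two steps, $\D_\a(I^{(t)})$ is the union of the full simplices on the vertex sets $G$, over those $G\in\max(\G)$ with $\sum_{i\notin G}a_i < t$. To read off the facets I would use the elementary fact that two distinct maximal independent sets of $\G$ are never comparable under inclusion; hence none of these simplices is contained in another, and the facets of the union are exactly the maximal faces $G$ of its pieces. This gives precisely: $F$ is a facet of $\D_\a(I^{(t)})$ if and only if $F = G$ for some $G\in\max(\G)$ with $\sum_{i\notin G}a_i < t$, which is the assertion. (One may also note that if $F$ is not independent then it lies in no $G\in\max(\G)$, so it is not even a face, consistently with the statement.) I expect the only genuinely technical point to be the localization bookkeeping in the second step — in particular that a monomial involving no inverted variables lies in $P_G^tR[x_i^{-1}\mid i\in F]$ if and only if it already lies in $P_G^t\,k[x_i\mid i\notin F]$, which uses $F\subseteq G$ together with flatness of localization; the rest is formal.
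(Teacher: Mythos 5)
Your argument is correct. It differs from the paper's proof in that the paper simply cites \cite[Lemma 1.5]{MT} for the identification of the facets of $\D_\a(I^{(t)})$ with the maximal independent sets $F$ satisfying $x^\a \notin P_F^t$, and then adds the one-line observation that $x^\a \notin P_F^t$ is equivalent to $\sum_{i \notin F} a_i < t$; you instead reprove the cited statement from scratch. Your route --- passing the decomposition $I^{(t)} = \bigcap_{G \in \max(\G)} P_G^t$ through the degree-complex construction to get $\D_\a(I^{(t)}) = \bigcup_G \D_\a(P_G^t)$, computing each $\D_\a(P_G^t)$ as either the full simplex on $G$ or the void complex, and then using the incomparability of distinct maximal independent sets --- is sound at every step: the intersection-to-union step is justified by flatness of localization, the case split $F \subseteq G$ versus $F \not\subseteq G$ is handled correctly, and you rightly note that when $\sum_{i \notin G} a_i \ge t$ the complex $\D_\a(P_G^t)$ is void rather than $\{\emptyset\}$. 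What your version buys is self-containedness and slightly more information (you determine the entire complex $\D_\a(I^{(t)})$ as a union of full simplices, not just its facets); what the paper's version buys is brevity, since the combinatorial description of degree complexes of intersections of monomial prime powers is already available in \cite{MT}.
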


\begin{proof} 
By \cite[Lemma 1.5]{MT}, $F$ is a maximal facet of $\D_\a(I^{(t)})$ if and only if $F$ is a maximal independent set of $\G$ and $x^\a \not\in P_F^t$. It is obvious that $x^\a \not\in P_F^t$ if and only if $\sum_{i \not\in F}a_i < t$.
\end{proof}

We will use the following lemma to show the existence of disconnected degree complexes.\par

\begin{Lemma}\label{TNT}
Let $\G$ be a connected bipartite graph.  Let  $(F,G)$ be the partition of $\G$. 
For $t \ge n$, there exists $\a \in \NN^n$ with $a_i > 0$ for all $i \in V$ such that
$F,G$ are the only facets of $\D_\a(I^t)$ and
$$\sum_{i \in F}a_i = \sum_{i \in G}a_i = t-1.$$
\end{Lemma}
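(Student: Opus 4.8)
The plan is to construct the weight vector $\a$ explicitly, exploiting that for a connected bipartite graph $\G$ with partition $(F,G)$, the symbolic and ordinary powers coincide, so that $\D_\a(I^t) = \D_\a(I^{(t)})$ and Lemma \ref{facet2} applies verbatim. By Lemma \ref{facet2}, the facets of $\D_\a(I^t)$ are exactly the maximal independent sets $H$ of $\G$ with $\sum_{i \notin H} a_i < t$. Since $\G$ is connected and bipartite, $F$ and $G$ are its only \emph{minimal} vertex covers, hence the only two maximal independent sets of full "complementary" type one wants to keep; every other maximal independent set $H$ has complement a vertex cover strictly containing neither $F$ nor $G$ minimally, and in fact contains vertices from both sides. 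The strategy is therefore to choose $\a$ so that $\sum_{i \notin F} a_i = \sum_{i \in G} a_i = t-1$ and $\sum_{i \notin G} a_i = \sum_{i \in F} a_i = t-1$, while for every other maximal independent set $H$ we have $\sum_{i \notin H} a_i \ge t$.

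First I would fix the two equality constraints $\sum_{i \in F} a_i = t-1$ and $\sum_{i \in G} a_i = t-1$; this is consistent and, since $t \ge n \ge |F| + |G|$, leaves room to take all $a_i > 0$ (distribute $t-1$ among the $|F|$ positive integers on side $F$, similarly on $G$; possible because $t - 1 \ge n - 1 \ge |F|$ and $\ge |G|$). The remaining freedom is used to kill the other facets. Here is where connectedness enters: if $H$ is a maximal independent set other than $F$ and $G$, then $H$ meets both $F$ and $G$ in nonempty sets, so $V \setminus H$ also meets both sides; writing $\sum_{i \notin H} a_i = \sum_{i \in F \setminus H} a_i + \sum_{i \in G \setminus H} a_i$ and comparing with $\sum_{i \in G} a_i = t-1$, one sees $\sum_{i \notin H} a_i \ge t$ will hold provided the weights on the "extra" vertices $F \setminus H$ are chosen large enough to compensate. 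Concretely, I would pick one distinguished vertex $v_0 \in F$ and one $w_0 \in G$ adjacent in $\G$ (an edge exists since $\G$ has no isolated vertices and is connected), set $a_{v_0}$ and $a_{w_0}$ to absorb most of the mass $t-1$ on each side, and give the remaining vertices the value $1$. Then for $H \ne F, G$: if $v_0 \notin H$, the single term $a_{v_0}$ already pushes $\sum_{i \notin H} a_i$ up, and a short case analysis (using $|V \setminus H| \ge$ the number of small-weight vertices plus possibly $v_0$ or $w_0$, together with $t \ge n$) forces the sum to be $\ge t$; if $v_0 \in H$ then $H$ lies on side $F$ side plus some of $G$, but maximality and connectedness force $H$ to omit a full "cut" of $G$-vertices, again making $\sum_{i \notin H} a_i$ large. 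The bookkeeping is routine once the distinguished edge is fixed.

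The main obstacle I anticipate is verifying that \emph{no} maximal independent set other than $F$ and $G$ survives — i.e. that the weight assignment genuinely forces $\sum_{i \notin H} a_i \ge t$ for all such $H$ simultaneously, using only $t \ge n$ and not a larger bound. This requires a uniform lower bound on $|V \setminus H| = $ (size of the minimal vertex cover complementary to $H$) for $H \ne F, G$, which in turn uses that in a connected bipartite graph any minimal vertex cover other than the two sides has size at least $\ldots$ well, at least $2$ more than something — the cleanest route is: such an $H$ omits at least one vertex of $F$ and at least one vertex of $G$, and by maximal independence it must omit \emph{all} neighbors of each included vertex, so $|V \setminus H| \ge 2$ and, after assigning the small weights $1$ to all but the two distinguished vertices, $\sum_{i\notin H} a_i$ is at least $1 + a_{v_0}$ or $1 + a_{w_0}$ (whichever distinguished vertex is excluded), and choosing $a_{v_0} = a_{w_0} = t - |F|$ (resp.\ $t - |G|$) with the rest equal to $1$ makes this $\ge t - |F| + 1 \ge t - n + 1$; to close the gap to $\ge t$ one instead splits the mass so that \emph{two} coordinates on the relevant side are large, or argues that $V \setminus H$ must contain at least $n - |F| + 1$ vertices. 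I would resolve this by choosing the large weights on a small set of vertices forming an edge-cover-like structure so that every non-trivial minimal vertex cover hits at least one of them with a weight already $\ge t - (\text{number of remaining unit-weight vertices outside } H)$; the precise split is a finite optimization that I expect to work out cleanly, and this is the step to write with care.
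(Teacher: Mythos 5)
Your overall framing is right as far as it goes: since $\G$ is bipartite, $I^t=I^{(t)}$, so by Lemma \ref{facet2} the facets of $\D_\a(I^t)$ are exactly the maximal independent sets $H$ with $\sum_{i\notin H}a_i<t$, and the task reduces to exhibiting $\a$ with all $a_i>0$, $\sum_{i\in F}a_i=\sum_{i\in G}a_i=t-1$, and $\sum_{i\notin H}a_i\ge t$ for every maximal independent set $H\neq F,G$. But that feasibility statement \emph{is} the whole content of the lemma, and your concrete recipe for it fails. Take $\G$ the path $1\text{--}2\text{--}3\text{--}4\text{--}5\text{--}6$, so $F=\{1,3,5\}$, $G=\{2,4,6\}$, $n=6$; the other maximal independent sets are $\{1,4,6\}$, $\{1,3,6\}$, $\{2,5\}$. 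For every choice of distinguished edge, the weighting ``mass $t-|F|$ resp.\ $t-|G|$ on the two endpoints, weight $1$ elsewhere'' leaves one of these sets alive: e.g.\ for the edge $\{3,4\}$ one gets $\a=(1,1,t-3,t-3,1,1)$ and $\sum_{i\notin\{1,4,6\}}a_i=a_2+a_3+a_5=t-1<t$, so $\{1,4,6\}$ is a third facet (and it meets both $F$ and $G$, so it even reconnects the complex, destroying the property needed in Theorem \ref{symbolic1}); the other four edges fail similarly. A valid choice here is the more balanced $\a=(1,2,2,2,2,1)$ with $t=6$, which is not of your form, and your fallback (``split the mass over two coordinates'', ``a finite optimization that I expect to work out cleanly'') is exactly the missing argument, not a proof: one must kill a possibly exponential family of maximal independent sets simultaneously under two hard equality constraints. (Also, your assertion that $F$ and $G$ are the only minimal vertex covers of a connected bipartite graph is false -- already $P_4$ has $\{2,3\}$ -- though this is not where the argument breaks.)

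For comparison, the paper does not attempt a direct construction at exponent $t$. It quotes the proof of \cite[Lemma 3.4]{TNT}, which supplies, for some $s\le n-\varepsilon_0(\G)$, a vector $\b$ with $F,G$ the only facets of $\D_\b(I^s)$ and $\sum_{i\in F}b_i=\sum_{i\in G}b_i=s-1$, where the only possible zero coordinates sit at vertices incident solely to leaf edges. It then repeatedly replaces $\b$ by $\b+\e_j+\e_v$ along an edge $\{j,v\}$ with $b_j=0$: by Lemma \ref{facet2} this raises the exponent by one, keeps $F,G$ as the only facets, raises both side sums by one, and makes the coordinate $a_j$ positive; since $t-s\ge\varepsilon_0(\G)$ when $t\ge n$, iterating $t-s$ times yields the claim for every $t\ge n$ with all $a_i>0$. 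If you want a self-contained proof along your lines, you would in effect have to reprove that base-case construction, which is the step your proposal leaves open.
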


\begin{proof} 
Recall that a vertex $i \in V$ is a leaf of $\G$ if $\deg(i) = 1$, where $\deg(i)$ is the number of adjacent vertices to $i$. A leaf edge is an edge incident to a leaf. Let $\varepsilon_0(\G)$ denote the numbers of leaf edges of $\G$.
By the proof of \cite[Lemma 3.4]{TNT} we know that for some $s \le n - \varepsilon_0(\G)$, there exists $\b = (b_1,...,b_n) \in \NN^n$ such that
$F,G$ are the only facets of $\D_\b(I^s)$ and
$$\sum_{i \in F}b_i = \sum_{i \in G}b_i = s-1.$$
In particular, $b_j = 0$ if $j$ is incident only to leaf edges.
Since $\G$ is connected, two different vertices $i,j$ are not incident to the same leaf edge.
Therefore, the number of indices $j$ with $b_j = 0$ is equal or less than $\varepsilon_0(\G)$.
\par

Let $\e_j$ denote the $j$-th unit vector of $\NN^n$.
If $b_j = 0$, choose a vertex $v$ adjacent to $i$.
Set $\a = \b + \e_j +\e_v$. Then $a_j = 1$.
Let $H$ be an arbitrary maximal independent set of $\G$. Since $H$ does not contain both $j,v$, we have
$$\sum_{i \not\in H}a_i \le  \sum_{i \not\in H}b_i + 1.$$
This inequality becomes an equality if $H = F$ or $H = G$ because $F,G$ contain $j$ or $v$.
Note that $I^t = I^{(t)}$ for all $t \ge 1$ \cite[Theorem 5.9]{SVV}.  
Using Lemma \ref{facet2} we can check that $H$ is a facet of $\D_\a(I^{s+1})$ if and only if $H$ is a facet of $\D_\b(I^s)$.
From this it follows that $\D_\a(I^{s+1}) = \D_\b(I^s)$ and
$$\sum_{i \in F}a_i = \sum_{i \in G}a_i = s.$$

For $t \ge n$ we have $t-s \ge \varepsilon_0(\G)$. 
By repeating the above technique $t-s$ times we will reach the conclusion.
\end{proof}


\section{Existence of a power $I^t$ with $H_\mm^1(R/I^t) \neq 0$}

\begin{Theorem} \label{non-dominating1}
Let $\G$ be a connected non-bipartite graph which has at least a non-dominating odd cycle.
Assume that there exist an odd cycle $C$ of length $2r+1$ and a vertex $v$ not adjacent to $C$ such that 
the induced subgraph $\G_{V \setminus  N[v]}$ is connected. Then  \par
{\rm (1)} $H_\mm^1(R/I^t) \neq 0$ for $t \ge r+1$ if $C = V \setminus N[v]$, \par
{\rm (2)} $H_\mm^1(R/I^t) \neq 0$ for $t \ge n-|N[v]|-r-1$ if $C \neq V \setminus N[v]$.
\end{Theorem}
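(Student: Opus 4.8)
The plan is to obtain $H_\mm^1(R/I^t)\neq 0$ from condition (1) of Proposition \ref{TT}, i.e. by proving that $\depth R_v/I_v^t = 0$ for the given vertex $v$. By Lemma \ref{reduction} this is equivalent to $\depth S/J^t = 0$, where $J$ is the edge ideal of $\G' := \G_{V\setminus N[v]}$ in $S := k[x_i|\ i\notin N[v]]$. Since $v$ is not adjacent to $C$ and $v\notin V(C)$, we have $V(C)\subseteq V\setminus N[v]$, so $\G'$ contains the odd cycle $C$; being connected by hypothesis, $\G'$ is strongly non-bipartite, and Theorem \ref{LT} applies to $S/J^t$. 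Thus in each case it suffices to produce a dominating set $U$ of $\G'$ such that $\G'_U$ is strongly non-bipartite with $\mu^*(\G'_U)$ strictly below the claimed bound on $t$. I will repeatedly use that $\mu^*(\G_{V(C)}) = r$: starting an odd-beginning generalized ear decomposition with $C$ and adjoining each chord of $\G_{V(C)}$ as a walk of length $1$ gives a decomposition with no even walk, so $\varphi^*(\G_{V(C)}) = 0$ and $\mu^*(\G_{V(C)}) = (0 + (2r+1) - 1)/2 = r$.

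For part (1) we have $\G' = \G_{V(C)}$, hence $\mu^*(\G') = r$. Taking $U = V(\G')$, which is vacuously dominating, Theorem \ref{LT} gives $\depth S/J^t = 0$ whenever $t > r$, i.e. for all $t\ge r+1$, which is the assertion.

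For part (2) we have $V(C)\subsetneq V(\G')$, so $n' := |V(\G')| = n - |N[v]| \ge 2r+2$. The decisive point is that one may delete a single vertex: I claim there exists $w\in V(\G')\setminus V(C)$ with $\G'-w$ connected. Granting this, put $U := V(\G')\setminus\{w\}$. Then $U$ is dominating in $\G'$ (the only vertex outside $U$, namely $w$, has all of its neighbours in $U$), the induced graph $\G'_U = \G'-w$ is connected and still contains $C$, hence is strongly non-bipartite, and $V(C)\subseteq U$. Enumerating the vertices of $U\setminus V(C)$ so that each is adjacent to $V(C)$ together with the earlier ones — possible since $\G'_U$ is connected — and applying Lemma \ref{extension} once for each such vertex, I get
$$\mu^*(\G'_U) \le \mu^*(\G_{V(C)}) + \big(|U| - (2r+1)\big) = r + \big(n'-1-(2r+1)\big) = n'-r-2.$$
By Theorem \ref{LT}, $\depth S/J^t = 0$ for every $t > n'-r-2$, i.e. for $t\ge n'-r-1 = n-|N[v]|-r-1$, as claimed. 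To prove the claim, contract the set $V(C)$, which induces a connected subgraph of $\G'$ since it carries the cycle $C$, to a single vertex; the resulting graph is connected and, since $V(C)\subsetneq V(\G')$, has at least two vertices, so a spanning tree of it has at least two leaves, at least one of which differs from the contracted vertex. Such a leaf corresponds to a vertex $w\in V(\G')\setminus V(C)$ whose removal leaves the contracted graph connected; as $V(C)$ still spans a connected subgraph of $\G'-w$, the graph $\G'-w$ is connected as well.

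The main obstacle is the precise constant in part (2). Using the whole graph $\G'$ as the dominating set in Theorem \ref{LT} only yields $t\ge n-|N[v]|-r$, which is one more than asserted; the improvement by a unit comes exactly from the observation that a suitable vertex outside $C$ can be removed while preserving both connectivity and the dominating property. A secondary point to watch is that every intermediate induced subgraph in the iterated application of Lemma \ref{extension} must remain connected — hence strongly non-bipartite — which forces the chosen enumeration of $U\setminus V(C)$ to follow a connected growth out of $V(C)$.
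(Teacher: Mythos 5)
Your proposal is correct and follows essentially the same route as the paper's proof: reduce via Proposition \ref{TT}(1) and Lemma \ref{reduction} to showing $\depth S/J^t = 0$, then apply Theorem \ref{LT} with $\mu^*(\G_C) \le r$ in case (1) and, in case (2), with a dominating set obtained by deleting one vertex outside $C$ and bounding $\mu^*$ by iterating Lemma \ref{extension}. The only difference is that you supply explicit justifications (the chord argument for $\mu^*(\G_C)=r$ and the contraction/spanning-tree argument for the removable vertex $w$) for steps the paper asserts with a citation or without detail, and these justifications are sound.
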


\begin{proof} 
By Proposition \ref{TT},  $H_\mm^1(R/I^t) \neq 0$ if $\depth R_v/I_v^t  =  0$.
Let $J$ be the edge ideal of $\G_{V \setminus  N[v]}$ in $S := k[x_i|\ i \not\in N[v]]$.
By Lemma \ref{reduction}, $\depth R_v/I_v^t  =  0$ if $\depth S/J^t = 0$. 
Since $v$ is not adjacent to $C$, $\G_{V \setminus  N[v]}$ contains $C$. 
Thus, $\G_{V \setminus  N[v]}$ is a non-bipartite graph. 
By Theorem \ref{LT}, $\depth S/J^t = 0$ for $t \ge s(\G_{V \setminus  N[v]})+1$.
\par
If $C = V \setminus N[V]$, we have $\mu^*(\G_C) \le \mu^*(C) = r$ \cite[Lemma 5.1]{LT}. 
Hence, $s(\G_C) \le r.$ 
\par
If $C \neq V \setminus N[V]$, 
there exists a subset $U \subseteq V \setminus N[V]$ which differs from $V \setminus N[V]$ by only a vertex
such that $\G_U$ contains $C$ and is connected. Obviously, $U$ is a dominating set of $\G_{V \setminus  N[v]}$.
Since $\G_U$ can be seen as the extension of $C$ by $|U| - 2r-1$ edges, we can use Proposition \ref{extension} to deduce that 
$$\mu^*(\G_U) \le \mu^*(\G_C) + |U| - 2r-1 \le \mu^*(C) + |V \setminus N[v]|-2r-2 = n-N[v]-r-2.$$
From this it follows that $s(\G_{V \setminus  N[v]}) \le n-|N[v]|-r-2.$
\end{proof}

\begin{Remark} 
The range $t \ge r+1$ or $t \ge n-|N[v]|-r-1$ for $H_\mm^1(R/I^t) \neq 0$ in Theorem \ref{non-dominating1} is not the best possible. For case (1) let $\G$ be the graph in Figure 2. Then $r +1 = 2$, whereas $H_\mm^1(R/I) \neq 0$ by Proposition \ref{depth R/I} (there is a vertex adjacent to all other vertices). For case (2) let $\G$ be the graph in Figure 1. Then $n-|N[v]|-r-1 = 3$, whereas $H_\mm^1(R/I^2) \neq 0$ by Example \ref{best}.
\end{Remark} 

\begin{figure}[ht!]

\begin{tikzpicture}[scale=0.5] 

\draw [thick] (0,0) coordinate (a) -- (0,2) coordinate (b) ;
\draw [thick] (0,2) coordinate (b) -- (1.5,1) coordinate (c) ;
\draw [thick] (1.5,1) coordinate (c) -- (0,0) coordinate (a) ; 
\draw [thick] (1.5,1) coordinate (c) -- (3.5,1) coordinate (d);
\draw [thick] (3.5,1) coordinate (d) -- (5.5,1) coordinate (e);
\draw [thick] (3.5,1) coordinate (d) -- (0,0) coordinate (a);
\draw [thick] (3.5,1) coordinate (d) -- (0,2) coordinate (b);

\fill (a) circle (3pt);
\fill (b) circle (3pt);
\fill (c) circle (3pt);
\fill (d) circle (3pt);
\fill (e) circle (3pt);
 
\end{tikzpicture}
\caption{}
\end{figure}

\begin{Remark} 
The additional assumption of Theorem \ref{non-dominating1} seems to be always satisfied. However, that is not the case;  see the graph in Figure 3. It would be of interest to characterize connected non-bipartite graphs such that $\G_{V \setminus  N[v]}$ is disconnected for any vertex $v$ not adjacent to any odd cycle.
\end{Remark} 

\begin{figure}[ht!]

\begin{tikzpicture}[scale=0.5] 

\draw [thick] (0,0) coordinate (a) -- (0,2) coordinate (b) ;
\draw [thick] (0,2) coordinate (b) -- (1.5,1) coordinate (c) ;
\draw [thick] (1.5,1) coordinate (c) -- (0,0) coordinate (a) ; 
\draw [thick] (1.5,1) coordinate (c) -- (3.5,1) coordinate (d);
\draw [thick] (3.5,1) coordinate (d) -- (5,2) coordinate (e);
\draw [thick] (3.5,1) coordinate (d) -- (5,0) coordinate (f);

\fill (a) circle (3pt);
\fill (b) circle (3pt);
\fill (c) circle (3pt);
\fill (d) circle (3pt);
\fill (e) circle (3pt);
\fill (f) circle (3pt);
 
\end{tikzpicture}
\caption{}
\end{figure}

If $\G$ has only dominating odd cycles, we are able to give a criterion for the existence of $H_\mm^1(R/I^t) \neq 0$. This criterion is a consequence of the following criterion for the existence of a power $I^t$ with $\depth R/I^{(t)} = 1$.

\begin{Theorem} \label{symbolic1}
Let $\G$ be a connected graph. Then $\depth R/I^{(t)} = 1$ for some $t \ge 1$ if and only if $\G$ has two disjoint maximal independent sets $F,G$ such that the induced graph $\G_{F \cup G}$ is connected.
In this case, $\depth R/I^{(t)} = 1$ for $t \ge |F| + |G|$.
\end{Theorem}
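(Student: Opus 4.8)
The plan is to characterize $\depth R/I^{(t)} = 1$ via the two conditions $H_\mm^0(R/I^{(t)}) = 0$ and $H_\mm^1(R/I^{(t)}) \neq 0$, and to translate each into combinatorial statements using the degree-complex machinery set up in Section 1. Since symbolic powers are unmixed and $I^{(t)} = \bigcap_{F \in \max(\G)} P_F^t$, each $P_F$ has height $n - |F| \ge 1$, so $\depth R/I^{(t)} \ge 1$ is automatic; hence $\depth R/I^{(t)} = 1$ is equivalent to $H_\mm^1(R/I^{(t)}) \neq 0$. By Takayama's formula, $H_\mm^1(R/I^{(t)})_\a \neq 0$ forces $\D_\a(I^{(t)})$ to be disconnected (the $H^0$-part vanishes because symbolic powers have positive depth, so no $\a$ gives an empty-but-nonvoid complex of the wrong type). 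By Lemma \ref{facet2}, the facets of $\D_\a(I^{(t)})$ are exactly the maximal independent sets $F$ with $\sum_{i \notin F} a_i < t$. So the whole problem reduces to: there exist $t$ and $\a \in \NN^n$ such that the set of maximal independent sets $F$ satisfying $\sum_{i \notin F} a_i < t$ forms a disconnected simplicial complex.

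For the "if" direction, suppose $F, G$ are disjoint maximal independent sets with $\G_{F \cup G}$ connected. The natural approach is to mimic Lemma \ref{TNT}: treat $\G_{F\cup G}$ as a connected bipartite graph with parts $F$ and $G$ (disjointness plus maximality in $\G$ forces each to be independent in the induced graph, and every vertex of $F\cup G$ lies in one part), apply Lemma \ref{TNT} to get $\b \in \NN^n$ supported on $F \cup G$ with $\sum_{i\in F}b_i = \sum_{i\in G}b_i = t'-1$ for $t' \ge |F|+|G|$, making $F$ and $G$ the only facets of the degree complex of the edge ideal of $\G_{F\cup G}$ — two disjoint facets, hence disconnected. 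The work is to lift this from the induced graph to $\G$: extend $\b$ by zero to all of $V$, and check using Lemma \ref{facet2} that the only maximal independent sets $H$ of $\G$ with $\sum_{i\notin H}b_i < t$ are $F$ and $G$. Because $\b$ is supported on $F\cup G$ and $\sum_{i\in F\cup G} b_i$ is large, any $H$ that misses a vertex of $F\cup G$ outside itself picks up too much weight; the connectedness of $\G_{F\cup G}$ is exactly what ensures only the "two sides" survive. This should give $\D_\a(I^{(t)})$ with facet set $\{F,G\}$, which is disconnected, and the bound $t \ge |F|+|G|$ follows from the bound in Lemma \ref{TNT}.

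For the "only if" direction, suppose $\D_\a(I^{(t)})$ is disconnected for some $\a$. Then it has at least two facets lying in different connected components; by Lemma \ref{facet2} these are maximal independent sets $F, G$ of $\G$ with $\sum_{i\notin F}a_i < t$ and $\sum_{i\notin G}a_i < t$, and $F \cap G$ must be contained in no common component — in particular $F$ and $G$ are not connected through any chain of facets. I would first argue $F \cap G = \emptyset$: if they shared a vertex they would lie in the same component. Then I must produce the connectedness of $\G_{F\cup G}$. Here the key is that disconnectedness of $\D_\a(I^{(t)})$ as a simplicial complex on $F \cup G$ (restricting attention to vertices in the two facets) combined with $\G$ being a connected graph should force $\G_{F \cup G}$ connected — otherwise one could split $F \cup G = A \sqcup B$ with no edges between, and then independence is "cheap" so one finds a maximal independent set straddling $A$ and $B$ that connects the two facets in the degree complex, contradicting disconnectedness.

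The main obstacle I anticipate is the "only if" direction, specifically extracting graph-theoretic connectedness of $\G_{F\cup G}$ from the topological disconnectedness of the degree complex: one must carefully control which maximal independent sets $H$ satisfy $\sum_{i\notin H} a_i < t$ and show that if $\G_{F\cup G}$ were disconnected one could interpolate a "bridging" maximal independent set with small enough defect, reconnecting the complex. Managing the weights $a_i$ — possibly needing to first reduce to the case where $a_i$ is supported on, or controlled on, $F\cup G$, and handling vertices outside $F\cup G$ that could be added to independent sets — is the delicate bookkeeping. I would expect to need an auxiliary lemma isolating exactly this reduction, perhaps already available from \cite{TT} or \cite{MT} in the guise of the structure of facets of $\D_\a$ for symbolic powers.
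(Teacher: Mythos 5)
Your overall route (reduce to $H_\mm^1(R/I^{(t)}) \neq 0$, translate via degree complexes and Lemma \ref{facet2}, use Lemma \ref{TNT} for the ``if'' direction, and a bridging independent set for the ``only if'' direction) is the same as the paper's, but there is a genuine gap at the very first reduction. You claim that $H_\mm^1(R/I^{(t)}) \neq 0$ forces $\D_\a(I^{(t)})$ to be disconnected for some $\a \in \NN^n$, dismissing the other alternative because ``symbolic powers have positive depth.'' By Proposition \ref{TT} applied to $I^{(t)}$, the other alternative is $\depth R_v/(I^{(t)})_v = 0$ for some vertex $v$, i.e.\ a statement about the \emph{localized} ideal in $R_v$, not about $R/I^{(t)}$ itself, and it genuinely occurs: if $N[v] = V$ then the only maximal independent set containing $v$ is $\{v\}$, so $(I^{(t)})_v$ is the $t$-th power of the maximal ideal of $R_v$ and has depth $0$. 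So your stated justification is wrong, and even when $N[v] \neq V$ for all $v$ you still owe an argument that this alternative cannot hold (the paper gets it because the associated primes of the localized symbolic power are face primes $P_F R_v$ with $|F| \ge 2$, hence never the maximal ideal of $R_v$). The paper handles the exceptional case separately and cheaply: if $N[v] = V$, then condition (1) of Proposition \ref{TT} gives $H_\mm^1(R/I^{(t)}) \neq 0$ for all $t$, and $\{v\}$ together with any maximal independent set of $\G \setminus v$ is the required disjoint pair with connected union, so both sides of the equivalence hold; only after this case split does the problem reduce purely to disconnected degree complexes.

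The second issue is that your ``only if'' direction is left as an acknowledged obstacle, with the hope of an auxiliary lemma from \cite{TT} or \cite{MT}; in fact no such lemma is needed, and no reduction of the weights $a_i$ to $F \cup G$ is needed either. If $\G_{F\cup G}$ were disconnected, write $F\cup G = V_1 \sqcup V_2$ with no edges between, set $F_i = F\cap V_i$, $G_i = G\cap V_i$; then $F_1\cup G_2$ and $F_2\cup G_1$ are independent (maximal, after enlarging if you prefer), and the identity $\sum_{i\notin F_1\cup G_2}a_i + \sum_{i\notin F_2\cup G_1}a_i = \sum_{i\notin F}a_i + \sum_{i\notin G}a_i \le 2(t-1)$ forces one of them to have weight at most $t-1$, hence by Lemma \ref{facet2} it lies in $\D_\a(I^{(t)})$ and connects $F$ to $G$, contradicting their lying in different components. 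Your ``if'' direction matches the paper's argument (extend $\b$ from Lemma \ref{TNT} by zero, use $J^t \subseteq I^{(t)}$ to see that a face of $\D_\a(I^{(t)})$ inside $F\cup G$ is a face of $\D_\b(J^t)$, and use $b_i > 0$ for the weight estimate killing any third facet), so once the reduction step is repaired along the lines above, the proof goes through as in the paper.
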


\begin{proof}
Since $\mm$ is not an associated prime of $I^{(t)}$, $H_\mm^0(R/I^{(t)}) = 0$. 
Therefore, $\depth R/I^{(t)} = 1$ if and only if $H_\mm^1(R/I^{(t)}) \neq 0$.
\par

If there exists a vertex $v$ such that $N[v] = V$, 
then $x_i \in IR[x_v^{-1}]$ for all $i \neq v$ because $x_ix_v \in I$.
Hence, $I_v = IR[x_v^{-1}] \cap R_v$ is the maximal homogeneous ideal of $R_v = k[x_i|\ i \neq v]$. 
Therefore, $\dim R_v/I_v^t = 0$, which implies $H_\mm^1(R/I^{(t)}) \neq 0$ for all $t \ge 1$ by Proposition \ref{TT}.
In this case, $\{v\}$ and any maximal independent set $F$ of $\G_v$ are two disjoint maximal independent sets of $\G$ and $\G_{F+v}$ is connected.
\par 

If $N[v] \neq V$ for all vertices $v$, then $I_v$ is not the maximal homogeneous ideal of $R_v$. Hence $\dim R/I_v^t > 0$. Therefore, $\depth R/I_v^{(t)} > 0$ because $I^{(t)}_v$ is an unmixed ideal for all $t \ge 1$.
By Proposition \ref{TT}, $H_\mm^1(R/I^{(t)}) \neq 0$ if and only if there exists $\a \in \NN^n$ such that $\D_\a(I^{(t)})$ is disconnected. It suffices to show that there exists such $\a \in \NN^n$ for some $t \ge 1$ if and only if $\G$ has two disjoint maximal independent sets $F,G$ such that the induced graph $\G_{F \cup G}$ is connected.

Suppose that there exists $\a \in \NN^n$ such that $\D_\a(I^{(t)})$ is disconnected for some $t \ge 1$. 
Then $\D_\a(I^{(t)})$ has two disconnected facets $F,G$. 
By Lemma \ref{facet2}, $F,G$ are maximal independent sets of $\G$ and
$$ \sum_{i \not\in F}a_i \le t-1,\; 
\sum_{i \not\in G}a_i \le t-1.$$
From this it follows that 
$$\sum_{i \not\in F}a_i + \sum_{i \not\in G}a_i  \le 2(t-1).$$

Suppose that $\G_{F \cup G}$ is disconnected. Let $F \cup G = V_1 \sqcup V_2$ be a partition such that 
$\G_{F \cup G}$ is the disjoint union of the subgraphs $\G_{V_1}$ and $\G_{V_2}$.
For $i = 1,2$ let $F_i = F \cap V_i$ and $G_i = G \cap V_i$. Then $F = F_1\sqcup F_2$ and $G = G_1 \sqcup G_2$.
Hence $F_1 \cup G_2$ and $F_2 \cup G_1$ are maximal independent sets with
\begin{align*}
\sum_{i \not\in F_1 \cup G_2}a_i + \sum_{i \not\in F_2 \cup G_1}a_i  & = \sum_{i \not\in F_1}a_i + \sum_{i \not\in G_2}a_i + \sum_{i \not\in F_2}a_i + \sum_{i \not\in G_1}a_i\\
& = \sum_{i \not\in F}a_i + \sum_{i \not\in G}a_i.
\end{align*}
Therefore, $\displaystyle \sum_{i \not\in F_1 \cup G_2}a_i \le t-1$ or $\displaystyle \sum_{i \not\in F_2 \cup G_1}a_i \le t-1$.
By Lemma \ref{facet2}, this implies that $F_1 \cup G_2$ or $F_2 \cup G_1$ is a facet of $\D_\a(I^{(t)})$. Since $F$ and $G$ are connected via $F_1 \cup G_2$ or $F_2 \cup G_1$, we would get a contradiction to the disconnectedness of $F,G$ in $\D_\a(I^{(t)})$. \par

Conversely, assume that $\G$ has two disjoint maximal independents sets $F,G$ such that $\G_{F \cup G}$ is connected. Then $\G_{F \cup G}$ is a bipartite graph with the bipartition $(F,G)$.
Let $J$ denote the edge ideal of $\G_{F \cup G}$ and $r = |F \cup G|$. 
By Lemma \ref{TNT}, for $t \ge |F|+|G|$, there exists $\b \in \NN^r$ with $b_i > 0$ for all $i$ such that $F, G$ are the only facets of $\D_\b(J^t)$ and
$\displaystyle \sum_{i \in F} b_i = \sum_{i \in G} b_i = t-1.$
Embed $\NN^r$ in $\NN^n$ according to the indices of the vertices of $F \cup G$ in $V$.
Let $\a \in \NN^n$ such that $a_i = b_i$ if $i \in F\cup G$ and $a_i = 0$ else. 
Then $F, G$ are facets of $\D_\a(I^{(t)})$ by Lemma \ref{facet2}. 
Since $J^t \subset I^{(t)}$, any face of $\D_\a(I^{(t)})$ contained in $F \cup G$ is also a face of $\D_\b(J^t)$ by the definition of the degree complex. \par

Suppose that $\D_\a(I^{(t)})$ has a facet $H \neq F,G$. 
Then $H$ does not contain vertices of both $F,G$ because these vertices would form a face of $\D_\b(J^t)$ which connect $F$ with $G$. 
Without loss of generality we may assume that $H \cap F = \emptyset$. Then 
$H \neq G$. Thus,  
$$\sum_{i \not\in H} a_i \ge \sum_{i \in F} a_i + \sum_{i \in G - H}a_i = \sum_{i \in F} b_i + \sum_{i \in G - H}b_i> \sum_{i \in F} b_i = t-1.$$
By Lemma \ref{facet2}, this implies $H \not\in \D_\a(I^{(t)})$, a contradiction. 
So $F,G$ are the only facets of $\D_\a(I^t)$. Since $F,G$ are disjoint, $\D_\a(I^{(t)})$  is disconnected. By Proposition \ref{TT}, this implies $H_\mm^1(R/I^{(t)}) \neq 0$ for $t \ge |F|+|G|$.
\end{proof}

\begin{Remark} \label{RTY}
By \cite[Theorem 3.2]{RTY}, $\depth R/I^{(2)} = 1$ if and only if $\diam(\G^c) \ge 3$, where $\G^c$ denotes the graph of the non-edges of $\G$, and $\diam(\G^c)$ denotes the diameter of $\G^c$, which is the maximum of the distance between two vertices of $\G^c$. Therefore, if 
$\diam(G^c) \ge 3$, there always exists a pair of disjoint maximal independent sets $F,G$ such that the induced graph $\G_{F \cup G}$ is connected. 
\end{Remark}

\begin{Theorem} \label{dominating}
Let $\G$ be a  connected non-bipartite graph  which has only dominating odd cycles. Then $H_\mm^1(R/I^t) \neq 0$ for some $t \ge 1$ if and only if $\G$ has two disjoint maximal independent sets $F,G$ such that the induced graph $\G_{F \cup G}$ is connected. In this case, $H_\mm^1(R/I^t) \neq 0$ for $t \ge  |F|+|G|$.
\end{Theorem}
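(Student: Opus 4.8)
The plan is to reduce Theorem \ref{dominating} to Theorem \ref{symbolic1} by showing that, when every odd cycle of $\G$ is dominating, one has $\D_\a(I^t)=\D_\a(I^{(t)})$ for every $\a\in\NN^n$ and every $t$, so that $H_\mm^1(R/I^t)\neq 0$ is equivalent to $H_\mm^1(R/I^{(t)})\neq 0$ and Theorem \ref{symbolic1} applies directly. I would first dispose of the degenerate case in which $N[v]=V$ for some vertex $v$: then, exactly as in the proof of Theorem \ref{symbolic1}, $I_v$ is the maximal homogeneous ideal of $R_v$, so $\depth R_v/I_v^t=0$ and hence $H_\mm^1(R/I^t)\neq 0$ for every $t\ge 1$ by Proposition \ref{TT}; meanwhile $\{v\}$ together with any maximal independent set of $\G_{V\setminus\{v\}}$ are two disjoint maximal independent sets of $\G$ whose union induces a connected graph, so both sides of the asserted equivalence hold. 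From now on I may assume $N[v]\neq V$ for every $v$, which is the situation treated in the proof of Theorem \ref{symbolic1}.

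The heart of the proof is the claim that, under the hypothesis, every non-empty facet $F$ of every degree complex $\D_\a(I^t)$ is a maximal independent set of $\G$; granting this, $\D_\a(I^t)=\D_\a(I^{(t)})$. For the claim: by Lemma \ref{facet1} the prime $P_F$ is associated to $I^t$, so $F$ is an independent set; if $F$ were not maximal then $P_F$ would strictly contain a minimal prime of $I^t$, hence be an embedded prime, and by the criterion of \cite[Lemma 3.4]{HM} used as in Lemma \ref{reduction} (with $F$ in place of $\{v\}$) the edge ideal $J_F$ of $\G_{V\setminus N[F]}$ would satisfy $\depth S_F/J_F^s=0$ for some $s\le t$, where $S_F=k[x_i|\ i\notin N[F]]$; by Corollary \ref{zero} this forces $\G_{V\setminus N[F]}$ to be strongly non-bipartite, so it contains an odd cycle $C$, and then any vertex $w\in F$ is neither on $C$ nor adjacent to $C$, so $C$ is a non-dominating odd cycle of $\G$ --- a contradiction. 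Given the claim, $\D_\a(I^{(t)})\subseteq\D_\a(I^t)$ because $I^t\subseteq I^{(t)}$, while for the reverse inclusion each facet $F$ of $\D_\a(I^t)$ is maximal independent, so $I^t\subseteq P_F^t$ and $x^\a\notin I^tR[x_i^{-1}|\ i\in F]$ together give $\sum_{i\notin F}a_i<t$; by Lemma \ref{facet2} this makes $F$ a facet of $\D_\a(I^{(t)})$, whence every face of $\D_\a(I^t)$ lies in $\D_\a(I^{(t)})$.

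With the claim in hand the equivalence is immediate. For the forward direction, if $H_\mm^1(R/I^t)\neq 0$, then by Proposition \ref{TT} either $\depth R_v/I_v^t=0$ for some $v$ --- impossible, since by Lemma \ref{reduction} this makes $\G_{V\setminus N[v]}$ strongly non-bipartite and yields a non-dominating odd cycle exactly as above --- or $\D_\a(I^t)$ is disconnected for some $\a$; in that case $\D_\a(I^{(t)})=\D_\a(I^t)$ is disconnected, so $H_\mm^1(R/I^{(t)})\neq 0$, i.e.\ $\depth R/I^{(t)}=1$ (recall $\mm\notin\Ass(I^{(t)})$), and Theorem \ref{symbolic1} furnishes two disjoint maximal independent sets $F,G$ with $\G_{F\cup G}$ connected. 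Conversely, given such $F,G$, Theorem \ref{symbolic1} gives $\depth R/I^{(t)}=1$ for all $t\ge |F|+|G|$; since $N[v]\neq V$ for all $v$, the argument in the proof of Theorem \ref{symbolic1} shows $\D_\a(I^{(t)})$ is disconnected for a suitable $\a$, and then $\D_\a(I^t)=\D_\a(I^{(t)})$ is disconnected, so $H_\mm^1(R/I^t)\neq 0$ for all $t\ge |F|+|G|$, which is the asserted bound.

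The step I expect to require the most care is the claim, and within it the general form of the associated-prime criterion of \cite[Lemma 3.4]{HM}: Lemma \ref{reduction} only records the singleton case $F=\{v\}$, so one must verify that the same reduction --- invert $\{x_i|\ i\in F\}$ and pass to the edge ideal of $\G_{V\setminus N[F]}$ --- is valid for an arbitrary independent set $F$. Once that is settled, the translation ``embedded prime $\Rightarrow$ non-dominating odd cycle'' and the comparison of the two degree complexes are routine, and the remainder is a direct appeal to Theorem \ref{symbolic1}.
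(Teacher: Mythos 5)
Your proposal is correct in substance and rests on the same underlying idea as the paper: the hypothesis that every odd cycle is dominating forces all associated primes of $I^t$ other than $\mm$ to be minimal primes of $I$, and this reduces the statement to Theorem \ref{symbolic1}. The implementation differs, though. The paper proves minimality by localizing at a single vertex $v\notin\pp$: then $\G_{V\setminus N[v]}$ is bipartite, so $J^t=J^{(t)}$ is unmixed and $(J+(x_i\mid i\in N(v)))^t$ stays unmixed by \cite[Lemma 3.4]{HM}; consequently $I^{(t)}/I^t$ has finite length and the exact sequence $0\to I^{(t)}/I^t\to R/I^t\to R/I^{(t)}\to 0$ yields $H_\mm^1(R/I^t)\cong H_\mm^1(R/I^{(t)})$, after which Theorem \ref{symbolic1} finishes the proof. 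You instead prove $\D_\a(I^t)=\D_\a(I^{(t)})$ and shuttle disconnectedness back and forth through Proposition \ref{TT}, treating condition (1) separately; your key claim, that every nonempty facet of $\D_\a(I^t)$ is a maximal independent set, is exactly what the paper itself extracts later (in the proof of Theorem \ref{dominating2}) from the minimality of the associated primes, so the two routes buy essentially the same information, yours at the level of degree complexes and the paper's at the level of local cohomology, where the exact-sequence argument is somewhat shorter.

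Two technical points in your write-up need attention. First, your reduction for an arbitrary independent set $F$ (invert the variables of $F$ and pass to the edge ideal of $\G_{V\setminus N[F]}$) is not covered by Lemma \ref{reduction} as stated; it does follow from the general form of \cite[Lemma 3.4]{HM} (adding a set of variables), which is the form the paper itself uses in the unmixedness step, but you could avoid the issue altogether by localizing at a single vertex $v\in F$, as the paper does, to conclude that $P_F\neq\mm$ is minimal. Second, your justification of $\sum_{i\notin F}a_i<t$ for a maximal independent facet $F$ of $\D_\a(I^t)$ is a non sequitur as written: from $I^t\subseteq P_F^t$ and $x^\a\notin I^tR[x_i^{-1}\mid i\in F]$ one cannot conclude anything about membership in $P_F^t$, since the inclusion goes the wrong way. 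The fix is one line: because $F$ is a maximal independent set, every $j\notin F$ is adjacent to some $i\in F$, hence $IR[x_i^{-1}\mid i\in F]=P_FR[x_i^{-1}\mid i\in F]$, so $x^\a\notin I^tR[x_i^{-1}\mid i\in F]$ is equivalent to $\sum_{i\notin F}a_i<t$, and Lemma \ref{facet2} then places $F$ in $\D_\a(I^{(t)})$ as you intend. With these repairs the argument is complete and gives the stated bound $t\ge|F|+|G|$.
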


\begin{proof} 
First we will show that every associated prime $\pp \neq \mm$ of $I^t$ is a minimal prime of $I$ for all $t \ge 1$. 
Let $v$ be a vertex such that $x_v \not\in \pp$.
Then $\pp R[x_v^{-1}]$ is an associated prime of $I^tR[x_v^{-1}]$.
For $i \in N(v)$, we have $x_ix_v \in I$. Hence, $x_i \in IR[x_v^{-1}]$.
Let $J$ be the edge ideal of $\G_{V \setminus N[v]}$.
Then $IR[x_v^{-1}]$ is generated by $J$ and the variables $x_i$, $i \in N(v)$. 
Therefore, $\pp$ is an associated prime of $(J+(x_i|\ i \in N(v))^t$. \par

Since $v$ is adjacent to all odd cycles of $\G$, the graph $\G_{V \setminus N[v]}$ has no odd cycles. 
Hence, $\G_{V \setminus N[v]}$ is bipartite. From this it follows that $J^t = J^{(t)}$, which is an unmixed ideal for all $t \ge 1$ \cite[Theorem 5.9]{SVV}.  Since $(x_i|\ i \in N(v))^t$ is an unmixed ideal, $(J+(x_i|\ i \in N(v))^t$ is also an unmixed ideal \cite[Lemma 3.4]{HM}. Therefore, $\pp$ is a minimal prime of $(J+(x_i|\ i \in N(v))^t$.
Since $IR[x_v^{-1}] = (J+(x_i|\ i \in N(v))^tR[x_v^{-1}]$ and $x_v \not\in \pp$, this implies that $\pp$ is a minimal prime of $I$. \par

Since every associated prime $\pp \neq \mm$ of $I^t$ is a minimal prime of $I$, $I^{(t)}$ is the saturation of $I^t$.
Thus, $I^{(t)}/I^t$ has finite length. 
From the exact sequence
$$0 \rightarrow I^{(t)}/I^t \rightarrow R/I^t \rightarrow R/I^{(t)} \rightarrow 0$$
we deduce that $H_\mm^1(R/I^t) \cong H_\mm^1(R/I^{(t)})$.
Since $H_\mm^1(R/I^{(t)}) \neq 0$ means $\depth R/I^{(t)} = 1$, 
 the conclusion follows from Theorem \ref{symbolic1}.
\end{proof}

\begin{Remark}
The range $t \ge |F|+|G|$ for $H_\mm^1(R/I) \neq 0$ in Theorem \ref{dominating} is not the best possible. Let $\G$ be a triangle. Then $F,G$ consist of only one vertex. Hence $|F|+|G| = 2$, whereas $H_\mm^1(R/I) \neq 0$ by Proposition \ref{depth R/I}.  
\end{Remark}

The condition  $H_\mm^1(R/I) \neq 0$ for $t \gg 1$ does not imply the existence of a power $I^t$ with $\depth R/I^t = 1$.

\begin{Example}  
Let $\G$ be the pentagon. Then $\dim R/I = \depth R/I = \depth R/I^2 =  2$ by \cite[Corollary 3.4]{MT1} ($I$ is the Stanley-Reisner ideal of itself). By Theorem \ref{LT}, $\depth R/I^t = 0$ for $t \ge 3$, whereas $H_\mm^1(R/I^t) \neq 0$ for $t \gg 1$ by Theorem \ref{dominating}.
\end{Example}


\section{Persistence of the condition $H_\mm^1(R/I^t) \neq 0$}

By Proposition \ref{TT}, $H_\mm^1(R/I^t) \neq 0$ if and only if one of the following conditions is satisfied 
\begin{enumerate}
\item $\depth R_v/I^t_v = 0$ for some $v$.
\item There exists $\a \in \NN^n$ such that $\D_\a(I^t)$ is disconnected.
\end{enumerate}
In order to investigate the persistence of the condition $H_\mm^1(R/I^t) \neq 0$ we need to investigate whether the corresponding condition also holds for the ideal $I^{t+1}$. 
 
 \begin{Proposition} \label{persistence1} 
Assume that $\depth R_v/I^t_v = 0$. Then $\depth R_v/I^{t+1}_v = 0$. 
\end{Proposition}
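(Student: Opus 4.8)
The plan is to transport the hypothesis to the edge ideal of the smaller graph $\G' := \G_{V \setminus N[v]}$, apply the persistence statement of Corollary \ref{zero} there, and transport back. Put $S := k[x_i \mid i \notin N[v]]$ and let $J$ be the edge ideal of $\G'$ in $S$. If $N[v] = V$, then $I_v$ is the maximal homogeneous ideal of $R_v$ and $\depth R_v/I_v^s = 0$ for every $s \ge 1$ (exactly as in the proof of Theorem \ref{symbolic1}), so the assertion is trivial; hence we may assume $N[v] \neq V$.

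First I would invoke Lemma \ref{reduction}: the hypothesis $\depth R_v/I_v^t = 0$ is equivalent to $\depth S/J^t = 0$, and likewise $\depth R_v/I_v^{t+1} = 0$ is equivalent to $\depth S/J^{t+1} = 0$. So it suffices to show $\depth S/J^t = 0 \Rightarrow \depth S/J^{t+1} = 0$. Since $\depth S/J^t = 0$, the graph $\G'$ is strongly non-bipartite: this is recorded in the discussion following Lemma \ref{reduction} and follows from Theorem \ref{LT}, because a dominating set of $\G'$ witnessing $\depth S/J^t = 0$ meets every connected component of $\G'$ and would induce inside a bipartite component a bipartite subgraph, or one with an isolated vertex, contradicting strong non-bipartiteness. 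Now Corollary \ref{zero}, applied to $S$, $J$ and the strongly non-bipartite graph $\G'$, yields $\depth S/J^{t+1} = 0$, and one final application of Lemma \ref{reduction} gives $\depth R_v/I_v^{t+1} = 0$.

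Once the reduction via Lemma \ref{reduction} is set up, the proof is just a short chain of implications, so I do not expect a genuine obstacle; the only points that need a little attention are disposing of the degenerate case $N[v] = V$ and recalling why $\depth S/J^t = 0$ forces $\G'$ to be strongly non-bipartite, which is what makes Corollary \ref{zero} applicable. One could equally argue directly at the level of associated primes, since $\depth R_v/I_v^s = 0$ is equivalent to $P_{\{v\}} = (x_i \mid i \neq v)$ being an associated prime of $I^s$ (the opening step of the proof of Lemma \ref{reduction}) and the associated primes of the powers of an edge ideal form a non-decreasing chain by \cite{MMV}; the present proposal simply packages this fact through Corollary \ref{zero}.
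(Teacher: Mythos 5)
Your proposal is correct and is essentially the paper's own argument: transport the hypothesis to the edge ideal $J$ of $\G_{V\setminus N[v]}$ via Lemma \ref{reduction}, apply the persistence statement of Corollary \ref{zero}, and transport back with Lemma \ref{reduction} again. The extra remarks about the degenerate case $N[v]=V$ and about $\G_{V\setminus N[v]}$ being strongly non-bipartite are harmless added detail, not a different route.
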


\begin{proof} 
Let $J$ be the edge ideal of the graph $\G_{V \setminus N[v]}$ in the polynomial ring $S := k[x_i|\ i\not\in N[v]]$. 
By Lemma \ref{reduction}, $\depth R_v/I^t_v = 0$ implies $\depth S/J^t = 0$.
By Corollary \ref{zero}, $\depth S/J^t = 0$ implies $\depth S/J^{t+1}= 0$.
Hence $\depth R_v/I^{t+1}_v = 0$ by Lemma \ref{reduction}. 
\end{proof} 

In the following we say that two sets of vertices is adjacent in $\G$ if there is an edge connecting them.  

\begin{Proposition} \label{persistence2} 
Assume that there exists $\a \in \NN^n$ such that $\D_\a(I^t)$ is disconnected. If $\D_\a(I^t)$ has disconnected components which are adjacent in $\G$, then there exists $\b \in \NN^n$ such that $\D_\b(I^{t+1})$ is disconnected.
\end{Proposition}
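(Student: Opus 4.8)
The plan is to start from the given vector $\a$ with $\D_\a(I^t)$ disconnected and manufacture a new vector $\b$ for $I^{t+1}$ by a single-edge perturbation. By hypothesis there are two facets $F$ and $G$ of $\D_\a(I^t)$ lying in different connected components, and there is an edge $\{p,q\}$ of $\G$ with $p$ in (the vertex support of) one component and $q$ in the other; after relabeling we may assume $p$ is a vertex of the component containing $F$ and $q$ a vertex of the component containing $G$. First I would record the local picture around these two facets: using the degree-complex description, $F$ and $G$ being facets of $\D_\a(I^t)$ means $x^\a \notin I^t R[x_i^{-1}\mid i\in F]$ and similarly for $G$, while any set bridging the two components fails this condition. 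The key idea is to set $\b := \a + \e_p + \e_q$, so that $\sum_i b_i = \sum_i a_i + 2$, and to argue that $\D_\b(I^{t+1})$ is again disconnected, with $F$ and $G$ (or slight modifications of them) still separated.

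The technical heart is to show two things about $\b$. (i) $F$ and $G$ remain faces of $\D_\b(I^{t+1})$: passing from $\a$ to $\b$ adds $2$ to the total degree and we pass from $I^t$ to $I^{t+1}$, and one expects $x^\b \notin I^{t+1}R[x_i^{-1}\mid i\in F]$ because localizing at the variables in $F$ kills $x_p$ or $x_q$ only if $p$ or $q$ lies in $F$; in the worst case one of $p,q$ is in $F$ and the other is not, and a careful degree count — comparing $\sum_{i\notin F} b_i = \sum_{i\notin F} a_i + 1$ against $t+1$ — should give the claim, possibly after replacing $F$ by a facet of $\D_\b(I^{t+1})$ containing it. (ii) No face of $\D_\b(I^{t+1})$ connects the component of $F$ to the component of $G$. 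Here the point is that a connecting set $H$ would, via the single extra unit of degree at $p$ and at $q$, have to have been "almost" a connecting face for $\D_\a(I^t)$: since the only new edge of flexibility is along $\{p,q\}$, and $\G_{F\cup G}$-type bridging was already obstructed, the degree bookkeeping $\sum_{i\notin H} b_i \le t$ forces $\sum_{i\notin H} a_i \le t-1$, i.e. $H$ was a face of $\D_\a(I^t)$ — contradicting the disconnectedness we started with, unless $H$ sits entirely inside one component, which is exactly what we want.

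To make step (ii) rigorous I would work with the partition of $V$ induced by the connected components of $\D_\a(I^t)$, say $V = V_1 \sqcup \cdots \sqcup V_k \sqcup W$ where $V_1,\dots,V_k$ are the component vertex sets and $W$ collects vertices in no facet; then show that every facet of $\D_\b(I^{t+1})$ is contained in some $V_j \cup W$, by the degree argument above applied facet-by-facet, using that adding $\e_p+\e_q$ only relaxes the defining inequality by exactly one unit and that $p,q$ lie in distinct $V_j$'s. The role of the adjacency hypothesis is precisely that it lets us place the two added units at vertices in two different components, so that no single facet can absorb both relaxations to straddle the gap. The main obstacle I anticipate is bookkeeping the case where $p$ or $q$ already lies in $F$ or $G$: there the facet may genuinely change when we pass to $I^{t+1}$, and I would handle it by enlarging $F$ (resp. $G$) to a facet of $\D_\b(I^{t+1})$ and checking that the enlargement cannot escape $V_1 \cup W$ (resp. its own component), again via Lemma-\ref{facet2}-style degree inequalities adapted to ordinary powers through the containment $I^{(t+1)} \supseteq I^{t+1}$ and the combinatorics of independent sets.
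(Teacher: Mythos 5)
Your choice of perturbation, $\b=\a+\e_p+\e_q$ with $\{p,q\}$ an edge joining the two components, is exactly the paper's construction, but both of your verification steps lean on degree inequalities that are simply not available for ordinary powers. The characterization ``$H$ is a face (or facet) iff $\sum_{i\notin H}a_i<t$'' is Lemma \ref{facet2} and holds only for the symbolic power $I^{(t)}$; for $I^t$ a face of $\D_\a(I^t)$ can carry arbitrarily large degree outside $H$, so your step (ii) implication ``$H$ a face of $\D_\b(I^{t+1})$ forces $\sum_{i\notin H}b_i\le t$, hence $\sum_{i\notin H}a_i\le t-1$, hence $H$ was a face of $\D_\a(I^t)$'' breaks at the first link. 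The containment $I^{t+1}\subseteq I^{(t+1)}$ that you invoke to ``adapt'' Lemma \ref{facet2} gives only $\D_\b(I^{(t+1)})\subseteq\D_\b(I^{t+1})$, i.e.\ it helps you put faces \emph{into} $\D_\b(I^{t+1})$, not exclude bridging faces from it, which is what step (ii) needs. So as written there is a genuine gap in the central claim that no face of $\D_\b(I^{t+1})$ straddles two components.

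What replaces the degree bookkeeping is much simpler and is the actual content of the paper's proof: since $x_px_q\in I$, any $H$ with $x^\a\in I^tR[x_i^{-1}\mid i\in H]$ satisfies $x^\b=x^\a(x_px_q)\in I^{t+1}R[x_i^{-1}\mid i\in H]$, so \emph{every} face of $\D_\b(I^{t+1})$ is a face of $\D_\a(I^t)$; disconnectedness then follows as soon as the two vertices $p$ and $q$ themselves survive in $\D_\b(I^{t+1})$, since they lie in different components of $\D_\a(I^t)$. Note that this is all you need -- you do not have to keep the facets $F,G$ (your step (i)), which is a stronger and murkier claim when $p\notin F$. The survival of $p$ (and symmetrically $q$) is proved by a colon-ideal identity you do not have: if $x^\b\in I^{t+1}R[x_p^{-1}]$ then $x^\a\in I^{t+1}R[x_p^{-1}]:x_q$, and because $x_q\in IR[x_p^{-1}]$ (as $x_px_q\in I$ and $x_p$ is inverted) and $x_q$ is a variable of the monomial ideal $I$, one has $I^{t+1}R[x_p^{-1}]:x_q=I^tR[x_p^{-1}]$, contradicting $p\in\D_\a(I^t)$. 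Without these two ingredients -- the multiplication trick giving $\D_\b(I^{t+1})\subseteq\D_\a(I^t)$ and the colon identity keeping $p,q$ in the new complex -- your outline cannot be completed along the lines you propose.
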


\begin{proof} 
Let $\{u,v\}$ be an edge of $\G$ which connect two disconnected components of $\D_\a(I^{t})$ which are adjacent in $\G$. Let $\b = \a+\e_u+\e_v$.

Let $F$ be an arbitrary face of $\D_\b(I^{t+1})$. 
Then $x^\b \not\in I^{t+1}R[x_i^{-1}|\ i \in F]$ by the definition of the degree complex.
If $F$ is not a face of $\D_\a(I^t)$, then $x^\a \in I^tR[x_i^{-1}|\ i \in F]$. Since $x_ux_v \in I$, 
$x^\b = x^\a(x_ux_v) \in I^{t+1}R[x_i^{-1}|\ i \in F],$
which is a contradiction. Therefore, $F$ is a face of $\D_\a(I^t)$. 
Consequently, $\D_\b(I^{t+1})$ is a subcomplex of $\D_\a(I^t)$.
Since $u,v$ belong to different connected components of $\D_\a(I^t)$, 
$\D_\b(I^{t+1})$ is disconnected if $u,v \in \D_\b(I^{t+1})$.\par

Suppose that $u \not\in \D_\b(I^{t+1})$. 
Then $x^\b \in I^{t+1}R[x_u^{-1}]$. 
Hence 
$$x^\a \in I^{t+1}R[x_u^{-1}]: x_ux_v =  I^{t+1}R[x_u^{-1}]: x_v.$$
Since $x_ux_v \in I$, we have $x_v \in IR[x_u^{-1}]$. 
Since $I$ is a monomial ideal and $x_v$ is a variable,
$I^{t+1}R[x_u^{-1}]: x_v = I^tR[x_u^{-1}].$ 
Therefore, $x^\a \in I^tR[x_u^{-1}]$. This implies $u \not\in \D_\a(I^t)$, a contradiction.
So we can conclude that $u \in \D_\b(I^{t+1})$.
Similarly, we also have $v \in \D_\b(I^{t+1})$.
\end{proof}

\begin{Remark} \label{rem}
The additional condition on $\D_\a(I^t)$ in Theorem \ref{persistence2} is satisfied if $\D_\a(I^t)$ has a facet which is a maximal independent set. This follows from the fact that a maximal independent set $F$ is adjacent to every vertex outside of $F$. 
\end{Remark}

\begin{Theorem}  \label{dominating2}
Let $\G$ be a connected non-bipartite graph which has only dominating odd cycles. 
If $H_\mm^1(R/I^t) \neq 0$, then $H_\mm^1(R/I^{t+1}) \neq 0$. 
\end{Theorem}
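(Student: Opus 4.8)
The plan is to reduce the statement to Theorem~\ref{symbolic1} via the same localization argument used in the proof of Theorem~\ref{dominating}. The key structural fact established there is that, when $\G$ has only dominating odd cycles, every associated prime $\pp \neq \mm$ of $I^t$ is a minimal prime of $I$, so that $I^{(t)}$ is the saturation of $I^t$ and hence $H_\mm^1(R/I^t) \cong H_\mm^1(R/I^{(t)})$ for every $t \ge 1$. This isomorphism converts the claim into the statement that $H_\mm^1(R/I^{(t)}) \neq 0$ implies $H_\mm^1(R/I^{(t+1)}) \neq 0$, i.e.\ that $\depth R/I^{(t)} = 1$ implies $\depth R/I^{(t+1)} = 1$, which is precisely the persistence assertion of Theorem~\ref{symbolic2} for symbolic powers. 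So once the isomorphism is invoked, the result is immediate.

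Concretely, I would argue as follows. First recall from the proof of Theorem~\ref{dominating} that the hypothesis on dominating odd cycles forces, for each vertex $v$, the graph $\G_{V \setminus N[v]}$ to be bipartite, hence $J^s = J^{(s)}$ is unmixed for its edge ideal $J$ and all $s$; combined with \cite[Lemma 3.4]{HM} this gives that every associated prime of $I^t$ other than $\mm$ is a minimal prime of $I$. Therefore $I^{(t)}$ is the saturation of $I^t$, the module $I^{(t)}/I^t$ has finite length, and the exact sequence
$$0 \rightarrow I^{(t)}/I^t \rightarrow R/I^t \rightarrow R/I^{(t)} \rightarrow 0$$
yields $H_\mm^1(R/I^t) \cong H_\mm^1(R/I^{(t)})$ for all $t \ge 1$. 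Now suppose $H_\mm^1(R/I^t) \neq 0$; then $H_\mm^1(R/I^{(t)}) \neq 0$, so $\depth R/I^{(t)} = 1$ (the depth cannot be $0$ since $\mm$ is never associated to a symbolic power). By Theorem~\ref{symbolic2} the persistence of this condition gives $\depth R/I^{(t+1)} = 1$, hence $H_\mm^1(R/I^{(t+1)}) \neq 0$, and applying the isomorphism once more in degree $t+1$ gives $H_\mm^1(R/I^{t+1}) \neq 0$, as desired.

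The only genuine content is the cohomology isomorphism, and that is already carried out inside the proof of Theorem~\ref{dominating}; the rest is bookkeeping. The main obstacle — if one wanted a self-contained argument not relying on Theorem~\ref{symbolic2} — would be to prove persistence for symbolic powers directly, which one could attempt by producing, from a disconnecting vector $\a$ for $\D_\a(I^{(t)})$, a disconnecting vector $\b$ for $\D_\a(I^{(t+1)})$: taking $\b = \a + \e_u + \e_v$ for a suitable edge $\{u,v\}$ and using Lemma~\ref{facet2} to show $F,G$ remain the only facets and stay disconnected. But since Theorems~\ref{symbolic1} and~\ref{symbolic2} are available to us, the clean route is simply to quote them after establishing $H_\mm^1(R/I^t) \cong H_\mm^1(R/I^{(t)})$.
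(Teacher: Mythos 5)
Your argument is correct, but it takes a genuinely different route from the paper. The paper proves this theorem directly at the level of ordinary powers: it splits according to the two cases of Proposition~\ref{TT}, handles case (1) by Proposition~\ref{persistence1}, and handles case (2) by Proposition~\ref{persistence2} after combining Lemma~\ref{facet1} with the minimal-prime fact from the proof of Theorem~\ref{dominating} to show that every facet of $\D_\a(I^t)$ is a maximal independent set, so that Remark~\ref{rem} applies. You instead use that same minimal-prime fact only to obtain the isomorphism $H_\mm^1(R/I^t) \cong H_\mm^1(R/I^{(t)})$ for every $t$, and then quote the symbolic-power persistence of Theorem~\ref{symbolic2}. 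This is logically sound: Theorem~\ref{symbolic2} is proved independently of the present statement, so there is no circularity, only the cosmetic issue that it is stated after Theorem~\ref{dominating2} in the paper. Your route has the advantage of avoiding the case split entirely --- you never treat the case $\depth R_v/I_v^t = 0$ separately, since the isomorphism holds regardless of why $H_\mm^1(R/I^t) \neq 0$. What it does not make explicit is the structural byproduct of the paper's argument, namely that for these graphs every facet of a degree complex $\D_\a(I^t)$ is a maximal independent set; and at bottom the two mechanisms coincide, since the proof of Theorem~\ref{symbolic2} perturbs $\a$ by $\e_u+\e_v$ along an edge joining two disconnected facets, which is exactly the device of Proposition~\ref{persistence2}.
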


\begin{proof} 
If $H_\mm^1(R/I^t) \neq 0$ for some $t > 0$, then one of the conditions (1) and (2) of Proposition \ref{TT} is satisfied.
We only need to show that the corresponding condition holds for $I^{t+1}$.
If (1) is satisfied, the conclusion follows from Proposition \ref{persistence1}. \par

If (2) is satisfied, the conclusion follows from Proposition \ref{persistence2} if $\D_\a(I^t)$ has disconnected components which are adjacent in $\G$. By Remark \ref{rem}, this condition is satisfied if $\D_\a(I^t)$ has a facet which is a maximal independent set.

Let $F$ be an arbitrary facet of $\D_\a(I^t)$. 
Then $P_F := (x_i|\ i \not\in F)$ is an associated prime of $I^t$ by Lemma \ref{facet1}.
Since $F \neq \emptyset$, $P_F \neq \mm$. 
Hence $P_F$ is a minimal prime of $I$ by the proof of Theorem \ref{dominating}. 
Therefore, $F$ is a maximal independent sets of $\G$.  
\end{proof}

Our approach also yields the following result on symbolic powers, which is known before only for connected bipartite graph \cite[Lemma 1.3 (2)]{TNT}.

\begin{Theorem}  \label{symbolic2}
Let $\G$ be a connected graph. 
If $\depth (R/I^{(t)}) = 1$ for some $t \ge 1$, then $\depth (R/I^{(t+1)}) = 1$. 
\end{Theorem}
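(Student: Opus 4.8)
The plan is to follow the strategy behind Theorem~\ref{dominating2}, but to work with the symbolic powers $I^{(t)}$ in place of the ordinary powers throughout. Since every maximal independent set of $\G$ is non-empty, $\mm$ is not an associated prime of $I^{(t)}$, so $H^0_\mm(R/I^{(t)}) = 0$ and $\depth R/I^{(t)} = 1$ is equivalent to $H^1_\mm(R/I^{(t)}) \neq 0$. Applying Proposition~\ref{TT} to the monomial ideal $I^{(t)}$, the latter holds if and only if either (1) $\depth R_v/(I^{(t)})_v = 0$ for some $v$, or (2) $\D_\a(I^{(t)})$ is disconnected for some $\a \in \NN^n$. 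As was shown inside the proof of Theorem~\ref{symbolic1}, condition (1) holds for $I^{(t)}$ if and only if there is a vertex $v$ with $N[v] = V$, a condition not involving $t$; if such a $v$ exists then $H^1_\mm(R/I^{(t)}) \neq 0$ for every $t$ and there is nothing to prove. So I may assume $N[v] \neq V$ for every $v$, in which case $\depth R/I^{(t)} = 1$ is equivalent to the disconnectedness of $\D_\a(I^{(t)})$ for some $\a$.

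Assume then that $\D_\a(I^{(t)})$ is disconnected. By Lemma~\ref{facet2} every facet of $\D_\a(I^{(t)})$ is a maximal independent set of $\G$; this is the place where the symbolic case is actually easier than the ordinary one, since here no hypothesis on odd cycles is needed. Choose facets $F$ and $G$ lying in distinct connected components. As $F$ is a facet, $F \not\subseteq G$, so there is a vertex $u \in F \setminus G$; and since $G$ is a maximal independent set, $u$ is adjacent to some $v \in G$. Thus the edge $\{u,v\}$ of $\G$ joins the component of $F$ to the component of $G$. Put $\b = \a + \e_u + \e_v$. I claim $\D_\b(I^{(t+1)})$ is disconnected, which by Proposition~\ref{TT} gives $H^1_\mm(R/I^{(t+1)}) \neq 0$ and hence $\depth R/I^{(t+1)} = 1$.

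The verification is the symbolic analogue of Proposition~\ref{persistence2}. First, $\D_\b(I^{(t+1)})$ is a subcomplex of $\D_\a(I^{(t)})$: if some $H$ were a face of the former but not the latter, then $x^\a \in I^{(t)}R[x_i^{-1} \mid i \in H]$, whence $x^\b = x^\a x_u x_v \in I^{(t+1)}R[x_i^{-1} \mid i \in H]$ using $x_u x_v \in I = I^{(1)}$ and the standard inclusion $I^{(1)}I^{(t)} \subseteq I^{(t+1)}$, contradicting that $H$ is a face of $\D_\b(I^{(t+1)})$. Second, $u \in \D_\b(I^{(t+1)})$: otherwise $x^\b \in I^{(t+1)}R[x_u^{-1}]$, so $x^\a \in I^{(t+1)}R[x_u^{-1}] : x_v$. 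Here the key point is the localized identity $I^{(t+1)}R[x_u^{-1}] : x_v = I^{(t)}R[x_u^{-1}]$ for an edge $\{u,v\}$: inverting $x_u$ in the irredundant primary decomposition $I^{(t+1)} = \bigcap_{F \in \max(\G)} P_F^{t+1}$ kills exactly the components with $u \notin F$, each surviving $P_F$ contains $x_v$ because $v \in N(u)$, and $Q^{s+1} : x_v = Q^s$ for any monomial prime $Q \ni x_v$. Consequently $x^\a \in I^{(t)}R[x_u^{-1}]$, i.e.\ $u \notin \D_\a(I^{(t)})$, contradicting $u \in F$; the argument for $v$ is symmetric. Finally, $\D_\b(I^{(t+1)})$ is a subcomplex of $\D_\a(I^{(t)})$ containing the vertices $u$ and $v$, which lie in different connected components of $\D_\a(I^{(t)})$; hence they lie in different components of $\D_\b(I^{(t+1)})$, so $\D_\b(I^{(t+1)})$ is disconnected.

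I expect the only genuinely new ingredient --- and the main (if modest) obstacle --- to be the localized colon identity $I^{(t+1)}R[x_u^{-1}] : x_v = I^{(t)}R[x_u^{-1}]$, which replaces the trivial fact $I^{t+1}R[x_u^{-1}] : x_v = I^tR[x_u^{-1}]$ used for ordinary powers and requires keeping track of which primary components of $I^{(t+1)}$ remain after inverting $x_u$. Everything else is a routine transcription of the reasoning in Section~3, once one checks, as was already done inside the proof of Theorem~\ref{symbolic1}, that Proposition~\ref{TT} applies verbatim to the non-squarefree ideal $I^{(t)}$ and that $(I^{(t)})_v$ has positive depth whenever $N[v] \neq V$.
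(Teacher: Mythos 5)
Your proof is correct, and its skeleton is the same as the paper's: reduce via Proposition \ref{TT} to the disconnectedness of a degree complex of $I^{(t)}$ (after disposing of the case $N[v]=V$ exactly as in the proof of Theorem \ref{symbolic1}), pick facets $F,G$ in different components, which are maximal independent sets by Lemma \ref{facet2}, choose an edge $\{u,v\}$ with $u\in F$, $v\in G$, set $\b=\a+\e_u+\e_v$, and show $\D_\b(I^{(t+1)})$ is a disconnected subcomplex of $\D_\a(I^{(t)})$ using $I\,I^{(t)}\subseteq I^{(t+1)}$. The one place you diverge is the final verification: you prove that the \emph{vertices} $u,v$ survive in $\D_\b(I^{(t+1)})$ by transplanting the colon argument of Proposition \ref{persistence2} to symbolic powers, which forces you to establish the localized identity $I^{(t+1)}R[x_u^{-1}]:x_v=I^{(t)}R[x_u^{-1}]$ via the decomposition $I^{(t+1)}=\bigcap_{F\in\max(\G)}P_F^{t+1}$; your justification of that identity is sound (after inverting $x_u$ only components with $u\in F$ survive, each such $P_F$ contains $x_v$, and $P_F^{t+1}:x_v=P_F^t$). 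The paper avoids this entirely: it shows the facets $F$ and $G$ themselves remain faces of $\D_\b(I^{(t+1)})$ by the one-line degree count from Lemma \ref{facet2}, namely $\sum_{i\not\in F}b_i=\sum_{i\not\in F}a_i+1<t+1$ since $u\in F$ and $v\not\in F$ (and symmetrically for $G$). So the ``main obstacle'' you single out is real in your route but unnecessary in the paper's; on the other hand, your colon identity is a genuine symbolic analogue of the localization step in Proposition \ref{persistence2} and would be the tool needed if one wanted to run that proposition verbatim for symbolic powers without knowing the facets are maximal independent sets.
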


\begin{proof} 
Note that $\depth (R/I^{(t)}) = 1$ if and only if $H_\mm^1(R/I^{(t)}) \neq 0$.
By the proof of Theorem \ref{symbolic1}, we may assume that $N[v] \neq V$ for every vertex $v \in V$ and that 
there exists $\a \in \NN^n$ such that $\D_\a(I^{(t)})$ is disconnected. 

Let $F,G$ be facets of $\D_\a(I^{(t)})$ which are disconnected.
Since $F,G$ are maximal independent sets by Lemma \ref{facet2}, there exist adjacent vertices $u \in F$ and $v \in G$. Set $\b = \a+\e_u+\e_v$. We will show that $\D_\b(I^{(t+1)})$ is disconnected, which implies 
$H_\mm^1(R/I^{(t+1)}) \neq 0$ by Proposition \ref{TT}. \par

Let $H$ be an arbitrary face of $\D_\b(I^{(t+1)})$. 
Then $x^\b \not\in I^{(t+1)}R[x_i^{-1}|\ i \in H]$.
If $H$ is not a face of $\D_\a(I^{(t)})$, then $x^\a \in I^{(t)}R[x_i^{-1}|\ i \in H]$. Since $x_ux_v \in I$, 
$$x^\b= x^\a(x_ux_v) \in I^{(t)}IR[x_i^{-1}|\ i \in H] \subseteq I^{(t+1)}R[x_i^{-1}|\ i \in H],$$ 
which is a contradiction. Therefore, $H$ is a face of $\D_\a(I^{(t)})$. 
Consequently, $\D_\b(I^{(t+1)})$ is a subcomplex of $\D_\a(I^{(t)})$.
From this it follows that $\D_\b(I^{(t+1)})$ is disconnected if it contains $F,G$ (which belong to different connected components of $\D_\a(I^{(t)})$). \par

By Lemma \ref{facet2}, $F$ is a maximal independent set of $\G$ with $\sum_{i \not\in F}a_i < t$.
Since $u \in F$ and $v \not\in F$, 
$\sum_{i \not\in F}b_i = \sum_{i \not\in F}a_i + 1 <  t+1$. Hence, $F \in \D_\b(I^{(t+1)})$.
Similarly, we also have $G \in \D_\b(I^{(t+1)})$.
\end{proof}

With regard to the above results we raise the following problem, which we are unable to find a counter-example.

\begin{Conjecture} 
Let $\G$ be a connected graph. 
If $H_\mm^1(R/I^t) \neq 0$ for some $t > 0$, then $H_\mm^1(R/I^{t+1}) \neq 0$. 
\end{Conjecture}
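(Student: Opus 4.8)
The plan is to run through the dichotomy of Proposition \ref{TT}. Suppose $H_\mm^1(R/I^t)\neq 0$. If condition (1) of Proposition \ref{TT} holds, that is $\depth R_v/I^t_v = 0$ for some vertex $v$, then Proposition \ref{persistence1} gives $\depth R_v/I^{t+1}_v = 0$, whence $H_\mm^1(R/I^{t+1})\neq 0$ and we are done. So one may assume that only condition (2) holds: there is $\a\in\NN^n$ with $\D_\a(I^t)$ disconnected. If two connected components of $\D_\a(I^t)$ are joined by an edge of $\G$ --- by Remark \ref{rem} this is the case as soon as some facet of $\D_\a(I^t)$ is a maximal independent set of $\G$ --- then Proposition \ref{persistence2} produces $\b\in\NN^n$ with $\D_\b(I^{t+1})$ disconnected, and again $H_\mm^1(R/I^{t+1})\neq 0$. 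In particular, arguing as in the proof of Theorem \ref{dominating2}, the conjecture holds whenever every associated prime $\neq\mm$ of $I^t$ is a minimal prime of $I$, thereby recovering Theorem \ref{dominating2}.

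The case that remains open is when $\D_\a(I^t)$ is disconnected but no two of its components are adjacent in $\G$; then no facet of $\D_\a(I^t)$ is a maximal independent set, so the primes $P_F$ witnessing the disconnectedness are all embedded primes of $I^t$. Let $W_1,\dots,W_k$ be the vertex sets of the components of $\D_\a(I^t)$. Since $\G$ is connected and no edge of $\G$ joins distinct $W_i$, the set $Z := V\setminus(W_1\cup\cdots\cup W_k)$ is nonempty and $x^\a\in I^tR[x_v^{-1}]$ for every $v\in Z$. The approach I would take is to prove a normalization statement: if $\D_\a(I^t)$ is disconnected for some $\a$, then $\D_{\a'}(I^t)$ is disconnected with two components adjacent in $\G$ for a suitable $\a'\in\NN^n$. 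Concretely, one would try to build $\a'$ from $\a$ by shifting weight off the $I^t$-saturated vertices of $Z$ toward the boundaries of the $W_i$: decreasing weight at a vertex of $Z$ should not reconnect the components, since such a vertex already ``sees'' $I^t$, while the small amount of weight added near the $W_i$ should be absorbable without destroying the facets lying deep inside the components. Combined with Proposition \ref{persistence2}, such a lemma would finish the proof.

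The hard part is exactly this normalization step --- or, what amounts to the same, a direct construction of a suitable $\b$ for $I^{t+1}$. The difficulty is that $\D_\a(I^t)$ does not vary monotonically with $\a$, so a perturbation of $\a$ may unexpectedly merge two components or delete the facet one wants to keep, and there is no a priori bound guaranteeing that the perturbed complex stays disconnected. A more hands-on variant would be to set $\b = \a+\e_u+\e_v+\c$ with $\{u,v\}\in\G$ an edge forced to lie inside a single component of $\D_\a(I^t)$ and $\c\in\NN^n$ supported on $Z$; then $x^{\b-\a}\in I$, so $\D_\b(I^{t+1})$ is a subcomplex of $\D_\a(I^t)$ as in Proposition \ref{persistence2}, and everything reduces to choosing $\c$ so that a facet of a \emph{second} component of $\D_\a(I^t)$ still belongs to $\D_\b(I^{t+1})$. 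Making either of these work is what we have not been able to do, which is why the statement is recorded here as a conjecture.
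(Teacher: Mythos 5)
The statement you are addressing is not proved in the paper at all: it is recorded there as a Conjecture, immediately after the remark that the authors ``are unable to find a counter-example.'' Your partial argument reproduces exactly the paper's own partial progress toward it. The reduction via Proposition \ref{TT} into the two cases, the handling of case (1) by Proposition \ref{persistence1}, the handling of case (2) under the adjacency hypothesis by Proposition \ref{persistence2} together with Remark \ref{rem}, and the observation that this already settles the situation where every associated prime $\neq\mm$ of $I^t$ is minimal (Theorem \ref{dominating2}) --- all of this is precisely the route the paper takes in Section 3. So on the portion you actually establish, you are correct and aligned with the paper.

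The genuine gap is the one you yourself name: when $\D_\a(I^t)$ is disconnected but no two of its connected components are adjacent in $\G$, so that the facets witnessing disconnectedness correspond (via Lemma \ref{facet1}) to embedded primes of $I^t$ rather than to maximal independent sets. Neither your proposed ``normalization'' of $\a$ nor the variant $\b=\a+\e_u+\e_v+\c$ is carried out, and no argument in the paper covers this case either; the multiplication-by-an-edge trick that drives Proposition \ref{persistence2} and Theorem \ref{symbolic2} genuinely needs the two target components to contain the endpoints of a common edge, and without that there is no mechanism to keep a facet of a second component inside $\D_\b(I^{t+1})$. In short, your proposal does not prove the statement, but it does not overclaim: it correctly isolates the same obstruction that led the authors to leave the statement as an open conjecture, and any resolution would require a new idea beyond the degree-complex perturbations used in the paper.
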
 


\section{Decreasing behavior after $\depth R/I^t = 1$}

Let $I$ be the edge ideal of a graph $\G$.
If $\G$ is a connected non-bipartite graph, we know that $\depth R/I^t = 0$ for $t \gg 1$. 
If furthermore $\depth R/I^t = 1$ for some $t \ge 1$, we want to find a number 
 $\delta$ independent of $\G$ such that $\depth R/I^{t+\delta} = 0$.  

First, we will study the case $\depth R/I = 1$. 

\begin{Proposition} \label{depth R/I} 
$\depth R/I = 1$ if and only if there is a partition $V = A \sqcup B$ such that every vertex of $A$ is adjacent to every vertex $B$.
\end{Proposition}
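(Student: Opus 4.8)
The plan is to reduce the statement, via Proposition \ref{TT} applied to the edge ideal $I$, to a connectivity property of a single degree complex. First I would record two easy observations. Since $\G$ has no isolated vertices, $\emptyset$ is not a maximal independent set of $\G$, so $\mm = P_\emptyset$ is not an associated prime of $I$; hence $H_\mm^0(R/I) = 0$, $\depth R/I \ge 1$, and $\depth R/I = 1$ if and only if $H_\mm^1(R/I) \neq 0$. Second, the condition in the statement is merely a reformulation of ``the complementary graph $\G^c$ is disconnected'': a partition $V = A\sqcup B$ in which every vertex of $A$ is adjacent to every vertex of $B$ is exactly a partition of $V$ into nonempty unions of connected components of $\G^c$. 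So the goal becomes to show that $H_\mm^1(R/I)\neq 0$ if and only if $\G^c$ is disconnected.

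For the ``if'' direction I would argue directly with $\D_\0(I)$, which by the definition of the degree complex is precisely the independence complex of $\G$. If $V = A\sqcup B$ with every vertex of $A$ adjacent to every vertex of $B$, then every independent set of $\G$ lies inside $A$ or inside $B$, so no face of $\D_\0(I)$ meets both; since $A$ and $B$ are nonempty, $\D_\0(I)$ has at least two connected components. By Proposition \ref{TT}(2), $H_\mm^1(R/I)\neq 0$.

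For the ``only if'' direction I would invoke Proposition \ref{TT} and treat its two cases. If (1) holds, say $\depth R_v/I_v = 0$, then by Lemma \ref{reduction} the edge ideal $J$ of $\G_{V\setminus N[v]}$ in $S := k[x_i \mid i\notin N[v]]$ satisfies $\depth S/J = 0$; but as soon as $\G_{V\setminus N[v]}$ has a vertex, $\emptyset$ fails to be a maximal independent set of it, so $\depth S/J\ge 1$. Hence $V\setminus N[v] = \emptyset$, i.e.\ $N[v] = V$; then $v$ is an isolated vertex of $\G^c$ and, as $|V|\ge 2$, $\G^c$ is disconnected. If (2) holds, I would fix $\a\in\NN^n$ with $\D_\a(I)$ disconnected and put $W := \supp(\a)$. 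A short computation from the definition shows $F\in\D_\a(I)$ if and only if $F\cup W$ is independent in $\G$; so $W$ must be independent (otherwise $\D_\a(I)$ is void), and then $\D_\a(I)$ is the independence complex of the induced subgraph $\G_{V\setminus N(W)}$, where $N(W) := \bigcup_{w\in W}N(w)$. Its $1$-skeleton is the complement of $\G_{V\setminus N(W)}$, so that complement is disconnected, and in particular $|V\setminus N(W)|\ge 2$.

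The heart of the argument — and the step I expect to be the main obstacle — is showing that this last situation forces $W=\emptyset$. The point is that $W$ is a clique of $\G^c$, that $W\subseteq V\setminus N(W)$, and that every vertex of $(V\setminus N(W))\setminus W$ is by construction non-adjacent in $\G$ to every vertex of $W$, hence adjacent in $\G^c$ to every vertex of $W$. Thus, if $W\neq\emptyset$, the complement of $\G_{V\setminus N(W)}$ consists of the nonempty clique on $W$ together with vertices each joined to all of $W$, so it is connected — contradicting disconnectedness. Therefore $W = \emptyset$, so $\D_\a(I) = \D_\0(I)$ is the disconnected independence complex of $\G$, hence $\G^c$ is disconnected, and a partition as in the statement exists. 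The only routine points left to check are the two descriptions of degree complexes used above and the standard fact that the $1$-skeleton of the independence complex of a graph $H$ is the complement $H^c$, so that connectedness of the complex is connectedness of $H^c$.
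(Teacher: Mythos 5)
Your proof is correct, but it takes a genuinely different route from the paper's. The paper simply observes that $I$ is the Stanley--Reisner ideal of the independence complex $\D$ of $\G$ and quotes Hochster's formula: $H_\mm^1(R/I)\neq 0$ if and only if $\D$ is disconnected, which is exactly the stated partition condition. You instead stay inside the machinery the paper sets up for powers (Proposition \ref{TT}, Lemma \ref{reduction}, degree complexes): in case (1) you correctly force $N[v]=V$, and in case (2) you identify $\D_\a(I)$ with the independence complex of $\G_{V\setminus N(W)}$ for $W=\supp(\a)$ and show that disconnectedness forces $W=\emptyset$, since in the complementary graph $W$ is a clique joined to every remaining vertex; this reduces everything to $\D_\0(I)$, i.e. to the same combinatorial equivalence ``independence complex disconnected $\Leftrightarrow$ the partition exists'' that the paper reads off from Hochster. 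Your version is longer, but it avoids invoking Hochster's formula as an external ingredient (Proposition \ref{TT} carries the load instead) and yields the extra observation that for the squarefree ideal $I$ itself only $\a=\0$ can produce a disconnected degree complex. A further small plus: your justification that $\depth R/I\ge 1$ (namely $\mm$ is not an associated prime because $\emptyset$ is not a maximal independent set) is cleaner than the paper's appeal to unmixedness of $I$, which is not literally accurate for edge ideals.
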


\begin{proof}
Since $I$ is an unmixed ideal, $\depth R/I = 1$ if and only if $H_\mm^1(R/I) \neq 0$.
Let $\D$ be the complex of the independent sets of $\G$.
The edges of $\G$ are minimal among the sets not contained in $\D$.
Hence $I$ is the Stanley-Reisner ideal of $\D$. 
By Hochster’s formula for local cohomology of Stanley-Reisner ideals \cite{Hoc}, 
$H_\mm^1(R/I) \neq 0$ if and only if $\D$ is disconnected.
This condition is satisfied if and only if there is a partition $V = A \sqcup B$ 
such that every pair of vertices $u \in A, v \in B$ is dependent or, equivalently, $\{u,v\} \in \G$. 
\end{proof}

\begin{Theorem} \label{decrease1}
Let $\G$ be a non-bipartite graph.
If $\depth R/I = 1$, then $\depth R/I^2= 0$.
\end{Theorem}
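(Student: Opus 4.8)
The plan is to combine Proposition~\ref{depth R/I} with Theorem~\ref{LT} (the criterion for $\depth R/I^t=0$). By Proposition~\ref{depth R/I}, the hypothesis $\depth R/I=1$ gives a partition $V=A\sqcup B$ such that every vertex of $A$ is adjacent to every vertex of $B$; write $A=\{a_1,\dots,a_p\}$ and $B=\{b_1,\dots,b_q\}$ with $p,q\ge 1$. To conclude $\depth R/I^2=0$ via Theorem~\ref{LT}, it suffices to exhibit a dominating set $U$ of $\G$ such that $\G_U$ is strongly non-bipartite with $\mu^*(\G_U)<2$, i.e. $\mu^*(\G_U)\le 1$. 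The natural candidate is a triangle: since $\G$ is non-bipartite it contains an odd cycle, but more directly I would look for a triangle inside $\G$ of the form $\{a,a',b\}$ or $\{a,b,b'\}$ (two vertices on one side of the partition plus one on the other), which exists as soon as one side of the partition contains an edge of $\G$.

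The first step is therefore to argue that at least one of $\G_A$, $\G_B$ has an edge. If both $\G_A$ and $\G_B$ were edgeless, then $A$ and $B$ would both be independent sets and, since every $a_i$ is adjacent to every $b_j$, $\G$ would be the complete bipartite graph $K_{p,q}$ — contradicting the assumption that $\G$ is non-bipartite. So, say $\{a_1,a_2\}\in\G$; then $\{a_1,a_2,b_1\}$ is a triangle $C$ in $\G$ (using that $a_1,a_2$ are each adjacent to $b_1$). The second step is to check that $C$ is dominating: every vertex of $A\setminus C$ is adjacent to $b_1\in C$, and every vertex of $B\setminus C$ is adjacent to $a_1\in C$, so $V\setminus C$ is dominated by $C$. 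The third step is to invoke Lemma~\ref{triangle} (or directly Theorem~\ref{LT} together with $\mu^*(\G_C)=\mu^*(C)=1$): with the path $P$ of length $0$ (just the triangle $C$ itself, $s=0$), the dominating triangle $C$ gives $\depth R/I^t=0$ for $t\ge s+2=2$, hence $\depth R/I^2=0$.

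I would also record the symmetric case $\{b_1,b_2\}\in\G$, which yields the dominating triangle $\{b_1,b_2,a_1\}$ and the same conclusion. The only case left to double check is degenerate partitions, e.g. $|A|=1$: then $\G_B$ must contain an edge (otherwise $\G$ is a star $K_{1,q}$, which is bipartite), and the argument above applies with the roles swapped.

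The main obstacle, though it is a minor one, is making sure the triangle one produces is genuinely dominating and that the $\mu^*$-bookkeeping is airtight — i.e., that $\mu^*(\G_C)=1$ rather than something one must estimate via Lemma~\ref{extension}. Since $\G_C$ is exactly a triangle (an odd cycle of length $3$), $\mu^*(\G_C)=\mu^*(C)=1$ directly from the formula $\mu^*$ of an odd cycle of length $2t+1$ being $t$; so Theorem~\ref{LT} applies with $t=2>1=\mu^*(\G_C)$. Everything else is an immediate appeal to Proposition~\ref{depth R/I}, the non-bipartiteness hypothesis, and Lemma~\ref{triangle}.
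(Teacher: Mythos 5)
Your proposal is correct and follows essentially the same route as the paper: use Proposition~\ref{depth R/I} to get the partition $V=A\sqcup B$, observe that non-bipartiteness forces an edge inside $A$ or $B$, form the resulting triangle with one vertex on the opposite side, note it is dominating because every vertex of $A$ is adjacent to every vertex of $B$, and conclude via Theorem~\ref{LT} with $\mu^*(\G_C)=1<2$. The extra case distinctions (symmetric case, $|A|=1$) and the alternative appeal to Lemma~\ref{triangle} with $s=0$ are harmless but not needed.
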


\begin{proof}
By Theorem \ref{depth R/I}, we may assume that there is a partition $V = A \sqcup B$ such that every vertex of $A$ is adjacent to every vertex of $B$. As a consequence, every set containing vertices in both $A$ and $B$ is dominating in $\G$. 
Since $\G$ is non-bipartite, $A$ and $B$ can not be both independent sets. 
So we may assume that $A$ contains two adjacent vertices $u,v$. 
Let $w$ be a vertex of $B$. Then $u,v,w$ form a triangle of $\G$. 
Since $\{u,v,w\}$ is a dominating set of $\G$, $\depth R/I^2 = 0$ by Theorem \ref{LT}.
\end{proof}

Next, we consider the case $\depth R/I^2 = 1$. For that we already have the following criterion, which is a direct consequence of \cite[Theorem 4.8]{TT}.

\begin{Theorem} \label{square2} 
$\depth R/I^2 \le 1$ if and only if one of the following conditions is satisfied:\par
{\rm (1)} $\diam(\G^c) \ge 3$.\par
{\rm (2)}  There exists a triangle $C$ of $\G$ such that $V \setminus N[C]$ has at most one element or the induced graph $\G^c_{V \setminus N[C]}$ is not connected.
\end{Theorem}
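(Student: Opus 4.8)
The plan is to derive this as a near-immediate consequence of the characterization of $H_\mm^1(R/I^2) \neq 0$ obtained in \cite[Theorem 4.8]{TT}, combined with the known fact that $\depth R/I^2 = 0$ is governed by Theorem \ref{LT}. Observe first that $\depth R/I^2 \le 1$ is equivalent to $H_\mm^i(R/I^2) \neq 0$ for some $i \le 1$, i.e. to $H_\mm^0(R/I^2) \neq 0$ or $H_\mm^1(R/I^2) \neq 0$. So I would split into the case $\depth R/I^2 = 0$ and the case $\depth R/I^2 = 1$ and translate each into the combinatorial language of Proposition \ref{TT}. The condition $H_\mm^0(R/I^2) \neq 0$ means $\mm$ is an associated prime of $I^2$, which by Theorem \ref{LT} (with $t = 2$) happens precisely when there is a dominating set $U$ with $\G_U$ strongly non-bipartite and $\mu^*(\G_U) < 2$, i.e. $\mu^*(\G_U) \le 1$; such a $\G_U$ must contain a triangle $C$ and be obtainable from $C$ by attaching at most one edge, which one should be able to rephrase as: there is a triangle $C$ such that $V \setminus N[C]$ has at most one element. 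This is exactly the first alternative inside condition (2).

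For the case $H_\mm^1(R/I^2) \neq 0$, by Proposition \ref{TT} this splits into (1) $\depth R_v/I_v^2 = 0$ for some $v$, and (2) $\D_\a(I^2)$ disconnected for some $\a$. The first of these, via Lemma \ref{reduction}, says $\depth S/J^2 = 0$ where $J$ is the edge ideal of $\G_{V\setminus N[v]}$; applying Theorem \ref{LT} again to $J$ with $t=2$ produces a triangle in $\G_{V\setminus N[v]}$ with the same "one extra edge" behavior, which feeds into the triangle alternative of (2) in the statement (now $v$ plays the role of a vertex adjacent to the triangle, tightening $V \setminus N[C]$). The second, $\D_\a(I^2)$ disconnected, is the case that in \cite{TT} is analyzed via degree complexes and I expect yields the diameter condition $\diam(\G^c) \ge 3$ together with the "$\G^c_{V\setminus N[C]}$ not connected" alternative. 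The cleanest route is simply to cite \cite[Theorem 4.8]{TT} for the dichotomy $H_\mm^1(R/I^2) \neq 0 \iff \diam(\G^c) \ge 3$ or ($\G^c_{V\setminus N[C]}$ disconnected for some triangle $C$), and then merge it with the $\depth = 0$ analysis above; the two triangle conditions combine into the single disjunction "$V \setminus N[C]$ has at most one element or $\G^c_{V\setminus N[C]}$ is not connected" appearing in (2).

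The main obstacle is bookkeeping rather than conceptual: one must check that the triangle $C$ produced by the $\depth R/I^2 = 0$ analysis (via $\mu^*(\G_U)\le 1$ and Lemma \ref{extension} run backwards) and the triangle produced by the $\depth R_v/I_v^2 = 0$ analysis can both be packaged into the phrase "$V \setminus N[C]$ has at most one element", and that no case is double-counted or missed at the boundary where $|V \setminus N[C]|$ is exactly $0$ or $1$ versus $\G^c_{V\setminus N[C]}$ being empty or a single point (which is vacuously connected). Carefully tracking when $\mu^*(\G_U) = 0$ is impossible (a strongly non-bipartite graph needs at least one even walk in any odd-beginning ear decomposition, so $\mu^* \ge 1$ always, with equality iff $\G_U$ is essentially a triangle with pendant edges) pins down the "at most one element" threshold. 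Once that is verified, the equivalence in the statement follows by assembling the pieces.
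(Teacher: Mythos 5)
The paper offers no independent proof of this statement: it is presented as a direct consequence (essentially a contrapositive restatement) of \cite[Theorem 4.8]{TT}, so your ``cleanest route'' of simply citing that theorem is exactly the paper's route, and at that level the proposal is fine. However, the auxiliary analysis you add contains a concrete error. Your treatment of the case $H_\mm^0(R/I^2)\neq 0$ claims that $\mu^*(\G_U)<2$ allows $\G_U$ to be ``a triangle with pendant edges''; this is false, since by the definition of $\mu^*$ (or Lemma \ref{extension}) each added pendant edge raises $\mu^*$ by one, so a triangle with a single pendant edge already has $\mu^*=2$. Thus $\mu^*(\G_U)\le 1$ forces $\G_U$ to be exactly a triangle, and $\depth R/I^2=0$ corresponds to a \emph{dominating} triangle, i.e.\ $V\setminus N[C]=\emptyset$, not to $|V\setminus N[C]|\le 1$. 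The alternative $|V\setminus N[C]|=1$ does not come out of the $H^0$ analysis at all; it belongs to the $H^1$ side, e.g.\ via Proposition \ref{TT}(1) applied to the unique vertex $v$ outside $N[C]$ (then $C$ is dominating in $\G_{V\setminus N[v]}$, so $\depth R_v/I_v^2=0$ by Lemma \ref{reduction} and Theorem \ref{LT}). Finally, be aware that you are reconstructing the wording of \cite[Theorem 4.8]{TT} from memory; since the converse directions (that each of (1) and (2) forces $\depth R/I^2\le 1$) are being delegated entirely to that citation rather than proved, the proposal is only as complete as that reference--which, again, is precisely the position the paper itself takes.
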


\begin{Theorem}\label{square1}
Let $\G$ be a connected non-bipartite graph.
If $\depth R/I^2 = 1$, then $\depth R/I^5 = 0$.
\end{Theorem}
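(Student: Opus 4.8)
The plan is to feed the combinatorial classification of Theorem~\ref{square2} into the $\mu^*$-machinery. Since $\depth R/I^2=1$ we have $\depth R/I^2\le 1$, so either (1) $\diam(\G^c)\ge 3$, or (2) there is a triangle $C$ of $\G$ such that $V\setminus N[C]$ has at most one vertex or $\G^c_{V\setminus N[C]}$ is disconnected. In each case I will exhibit a triangle $C_0$ and a path $P$ of length $s\le 3$ meeting $C_0$ in exactly one of its endpoints such that $C_0\cup P$ is dominating in $\G$; Lemma~\ref{triangle} then gives $\depth R/I^t=0$ for all $t\ge s+2$, hence $\depth R/I^5=0$. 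Whenever the construction instead produces an honest dominating triangle, we get $\depth R/I^2=0$, which already contradicts the hypothesis and in any case yields the conclusion via Corollary~\ref{zero}.

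For case~(1) I would first record the elementary equivalence: $\diam(\G^c)\ge 3$ iff $\G$ has an edge $\{a,b\}$ with $N[a]\cup N[b]=V$, a \emph{dominating edge}. If $a$ and $b$ had a common neighbour $c$, then $\{a,b,c\}$ would be a dominating triangle and $\depth R/I^2=0$, excluded; so no such $c$ exists and $V=\{a,b\}\sqcup A\sqcup B$ with $A:=N(a)\setminus\{b\}$ and $B:=N(b)\setminus\{a\}$ disjoint. Colouring $a$ and $B$ by one colour and $b$ and $A$ by the other, one sees that $\G$ is bipartite unless there is an edge inside $A$ or inside $B$; since $\G$ is non-bipartite we may assume, after possibly swapping $a$ and $b$, that $\{u,v\}\subseteq A$ is an edge. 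Then $C_0:=\{a,u,v\}$ is a triangle, and $C_0$ together with the length-$1$ path $a$--$b$ is dominating because $a$ dominates $A\cup\{b\}$ and $b$ dominates $B\cup\{a\}$. Lemma~\ref{triangle} (with $s=1$) yields $\depth R/I^t=0$ for $t\ge 3$.

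For case~(2), if $V\setminus N[C]=\emptyset$ then $C$ is a dominating triangle and $\depth R/I^2=0$, which is excluded. If $V\setminus N[C]=\{w\}$, then (as $\G$ has no isolated vertices) $w$ has a neighbour $z\in N(C)$, and $C$ together with the length-$2$ path $c_0$--$z$--$w$, where $c_0\in C$ is adjacent to $z$, is dominating; Lemma~\ref{triangle} (with $s=2$) gives $\depth R/I^t=0$ for $t\ge 4$. The remaining subcase is the crux: $W:=V\setminus N[C]$ with $\G^c_W$ disconnected, so $W=W_1\sqcup W_2$ with $W_1,W_2\ne\emptyset$ and every vertex of $W_1$ adjacent in $\G$ to every vertex of $W_2$. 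Here the key point is that, since $V=N[C]\sqcup W$, a shortest path in $\G$ from $C$ to $W$ has length exactly $2$, say $c_0$--$z$--$w$ with $c_0\in C$, $z\in N(C)$, $w\in W$. After relabelling so that $w\in W_1$ and choosing any $v\in W_2$, the triangle $C$ together with the length-$3$ path $c_0$--$z$--$w$--$v$ is dominating, because $C$ dominates $N[C]$, $w$ dominates $W_2$ and $v$ dominates $W_1$. Lemma~\ref{triangle} (with $s=3$) gives exactly $\depth R/I^5=0$.

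The main obstacle is this last subcase: one must join the triangle $C$ to the disconnection $\{W_1,W_2\}$ of $\G^c_W$ by a path short enough to keep $\mu^*$ below $5$, and what makes it possible is precisely the partition $V=N[C]\sqcup W$, which leaves no room for interior vertices and so pins the length of a shortest $C$--$W$ path to $2$; one further edge into $W$ then reaches a vertex which, together with one vertex on the opposite side of the disconnection, dominates all of $W$. The routine verifications needed throughout are that each walk constructed is a genuine path with distinct vertices meeting the triangle only at one endpoint, so that Lemma~\ref{triangle} applies verbatim; these are immediate from the partition structures. It is worth noting that the exponent $5$ is forced only by this subcase — the other configurations yield $\depth R/I^t=0$ already for $t=3$ or $t=4$ — and that the hypothesis $\depth R/I^2=1$ (rather than merely $\le 1$) is used only to rule out the dominating-triangle situations.
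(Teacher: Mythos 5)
Your proposal is correct and follows essentially the same route as the paper: it feeds the two cases of Theorem \ref{square2} into the construction of a dominating triangle, or a triangle together with a path of length at most $3$ meeting it at one end, and concludes via Lemma \ref{triangle}, Theorem \ref{LT} and Corollary \ref{zero}. The only deviation is cosmetic (in the case $\diam(\G^c)\ge 3$ you build the triangle from an edge inside $N(a)$ rather than inside the non-neighborhood of a vertex, and you use the hypothesis $\depth R/I^2=1$ to exclude dominating triangles, which the paper simply absorbs), so no further comparison is needed.
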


The proof of Theorem \ref{square1} follows from the following propositions, which deal with the cases (1) and (2) of Theorem \ref{square2} separately.

\begin{Proposition} 
Let $\G$ be a connected non-bipartite graph. Assume that $\diam(\G^c) \ge 3$.
Then $\depth R/I^3 = 0$.
\end{Proposition}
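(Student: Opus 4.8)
Let $\G$ be a connected non-bipartite graph with $\diam(\G^c) \ge 3$. Then $\depth R/I^3 = 0$.

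**The plan.** The condition $\diam(\G^c) \ge 3$ says there are vertices $x,y$ with no edge of $\G^c$ shorter than length $3$ joining them; in particular $\{x,y\}\in\G$ (else $xy$ is an edge of $\G^c$, distance $1$) — wait, we need $xy$ to be a \emph{non}-edge of $\G$ for it to lie in $\G^c$, so $\diam(\G^c)\ge 3$ forces in particular that $\G^c$ is connected, and that some pair $x,y$ of vertices has $\G^c$-distance exactly (at least) $3$. Unpacking: $xy\in\G$ (distance would be $1$ in $\G^c$ otherwise), and there is no vertex $z$ with $xz,zy$ both non-edges of $\G$ (distance $2$ otherwise). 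So for every vertex $z\ne x,y$, at least one of $xz$, $zy$ is an edge of $\G$; equivalently $\{x,y\}$ is a dominating set of $\G$ and moreover $x,y$ are adjacent. The strategy will be: use $\diam(\G^c)\ge 3$ to locate a small non-bipartite dominating induced subgraph $\G_U$ with $\mu^*(\G_U) \le 2$, and conclude by Theorem~\ref{LT}.

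**Key steps.** First I would record the combinatorial translation of $\diam(\G^c) \ge 3$ as above: a dominating edge $\{x,y\}$ such that every other vertex is adjacent (in $\G$) to $x$ or to $y$. Next, using that $\G$ is non-bipartite and connected, I would find an odd cycle $C$ and build up from $\{x,y\}$. The cleanest approach: since $\{x,y\}$ dominates, take a shortest odd closed walk through $x$ or through $y$ — more concretely, pick a neighbor $x'$ of $x$ with $\{x,x'\}$ an edge; if $x'y\in\G$ then $\{x,y,x'\}$ is already a triangle which is dominating (it contains the dominating edge $xy$), so $\mu^*=1$ and even $\depth R/I^2=0$ by Lemma~\ref{triangle}. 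The remaining case is when $\G$ has no triangle using the edge $xy$ together with a common neighbor, i.e. $N(x)\cap N(y)=\emptyset$. Then $\G$ non-bipartite gives an odd cycle somewhere; I would connect it to the dominating edge by a short path. The degree complex / local cohomology machinery is \emph{not} what I would use here — the cleanest route is purely via $\mu^*$ and Theorem~\ref{LT}, aiming to exhibit a dominating induced subgraph that is a triangle plus a path of length $\le 1$, or an odd cycle of length $5$ that is dominating, either of which has $\mu^* \le 2$, so that $\depth R/I^{3}=0$.

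**Main obstacle.** The delicate point is controlling the \emph{size/structure} of the dominating non-bipartite induced subgraph so that its $\mu^*$-invariant is at most $2$. The hypothesis only hands us a dominating edge $\{x,y\}$; an odd cycle of $\G$ could a priori be far (in $\G$) from this edge, and naively gluing a connecting path via Lemma~\ref{extension} adds $1$ to $\mu^*$ per edge, which is too expensive. So the real content is to argue that when $N(x)\cap N(y)=\emptyset$, the non-bipartiteness must already be "visible near" $\{x,y\}$ — e.g. that there is an odd cycle of length $\le 5$ meeting $\{x,y\}$, or that one can choose $U = N[x]$ or $N[y]$ (which is dominating, being a superset of the dominating edge) and show $\G_{N[x]}$ or $\G_{N[y]}$ is non-bipartite with $\mu^*\le 2$. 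I expect the proof to proceed by choosing $U$ among such neighborhoods, using that every vertex is adjacent to $x$ or $y$, and a short case analysis on where the parity obstruction sits, invoking $\mu^*(\G_U)\le\mu^*(C)+(\text{few edges})$ from Lemma~\ref{extension} with the number of extra edges bounded by $1$. Pinning down that "$\le 1$" is the crux; once it is in hand, Theorem~\ref{LT} with $\mu^*(\G_U)\le 2 < 3$ finishes the proof.
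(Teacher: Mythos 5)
Your translation of $\diam(\G^c)\ge 3$ into a dominating edge $\{x,y\}$ (every other vertex adjacent in $\G$ to $x$ or to $y$) is exactly the paper's starting point, and your treatment of the case $N(x)\cap N(y)\neq\emptyset$ (a common neighbor gives a triangle containing the dominating edge, hence a dominating triangle) is correct. But you stop short precisely where the proof has its content: in the case $N(x)\cap N(y)=\emptyset$ you only speculate (``an odd cycle of length $\le 5$ meeting $\{x,y\}$'', or ``take $U=N[x]$ or $N[y]$'') and you explicitly flag the bound on $\mu^*$ as an unresolved crux. That case is not optional fine print; without it the statement is not proved, so the proposal has a genuine gap.

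The missing idea is that you should not go hunting for a pre-existing odd cycle at all. When $N(x)\cap N(y)=\emptyset$, the sets $\overline N(x):=V\setminus N(x)$ and $\overline N(y):=V\setminus N(y)$ cover $V$ (no vertex is adjacent to both $x$ and $y$), and the hypothesis $\diam(\G^c)\ge 3$ makes them disjoint; so $V=\overline N(x)\sqcup\overline N(y)$ is a partition. If both parts were independent sets, $\G$ would be bipartite, contrary to assumption. Hence, say, $\overline N(x)$ contains an edge $\{u,v\}$; since $u,v\notin\overline N(y)$, both are adjacent to $y$, and $\{y,u,v\}$ is a triangle $C$. Either $x\in\{u,v\}$, in which case $C$ contains the dominating edge $\{x,y\}$ and is itself dominating, so $\depth R/I^3=0$ by Theorem \ref{LT} since $\mu^*(C)=1$; or $x\notin\{u,v\}$, in which case $C$ together with the single edge $\{y,x\}$ is a dominating triangle-plus-path with $s=1$, and Lemma \ref{triangle} gives $\depth R/I^t=0$ for $t\ge s+2=3$. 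This is the paper's argument, and it is exactly the ``pinning down of $\le 1$'' you were missing: the parity obstruction is forced to sit on the dominating edge itself by the bipartition $\overline N(x)\sqcup\overline N(y)$, so no connecting path longer than one edge is ever needed.
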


\begin{proof}
For every vertex $i$ let $N_i := V \setminus N(i)$, which is the set of the non-adjacent vertices of $i$.
Then $N_i$ is the closed neighborhood of $i$ in $\G^c$. 
The assumption $\diam(\G^c) \ge 3$ implies that $\G$ has an edge $\{i,j\}$ such that $N_i \cap N_j = \emptyset$.
From this it follows that every vertex of $V$ is adjacent to $i$ or $j$. Hence $\{i,j\}$ is a dominating set of $\G$.
In particular, $i$ is adjacent to all vertices of $N_j$ and $j$ is adjacent to all vertices of $N_i$. 
\par
If $N_i \cup N_j \neq V$, every vertex $v \not\in N_i \cup N_j$ is adjacent to both $i,j$. Hence, $i,j,v$ form a triangle of $\G$. Since $\{i,j,v\}$ is a dominating set of $\G$, $\depth R/I^3 = 0$ by Theorem \ref{LT}. \par
If $N_i \cup N_j = V$, then $N_i, N_j$ can not be both independent sets because otherwise $\G$ would be bipartite.
So we may assume that $N_i$ contains two adjacent vertices $u,v$. Then $\{j,u,v\}$ form a triangle $C$ of $\G$. 
If $i$ is one of the vertices $u,v$, say $i = u$, then $C$ is a dominating set of $\G$. Thus, $\depth R/I^3 = 0$ by Theorem \ref{LT}. If $i$ is not one of the vertices $u,v$, $C \cup \{i,j\}$ is a dominating set of $\G$. Therefore, $\depth R/I^3 = 0$ by Lemma \ref{triangle}.
\end{proof}

\begin{Proposition}
Let $\G$ be a connected non-bipartite graph. Assume that there is a triangle $C$ of $\G$ such that either $V \setminus N[C]$ has at most one vertex or $\G_{V \setminus N[C]}^c$ is disconnected. Then $\depth R/I^5 = 0$.
\end{Proposition}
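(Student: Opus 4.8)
The plan is to produce, in each case allowed by the hypothesis, a triangle together with a path of length at most $3$ attached to it at one end whose union dominates $\G$, and then to invoke Lemma \ref{triangle}, which turns a dominating triangle‑plus‑path of length $s$ into the conclusion $\depth R/I^t = 0$ for all $t\ge s+2$. Since a path of length $3$ is affordable, this yields precisely $\depth R/I^5=0$; the $5$ in the statement is exactly $3+2$. Throughout set $W:=V\setminus N[C]$. The point to keep in mind is that $C$ already dominates $N[C]$, so any vertex set containing $C$ that in addition dominates $W$ is a dominating set of $\G$; the whole problem is therefore to reach the vertices of $W$ cheaply starting from $C$.

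First I would dispatch the case $|W|\le 1$. If $W=\emptyset$ then $C$ itself is dominating, $\G_C$ is a triangle with $\mu^*(\G_C)=1$, and Theorem \ref{LT} already gives $\depth R/I^2=0$, hence $\depth R/I^5=0$. If $W=\{w\}$ then, since $\G$ is connected and $w$ is not adjacent to $C$, the vertex $w$ has a neighbour $z$ which must lie in $N(C)$ (it is neither $w$ nor in $C$), so $z$ is adjacent to some $a\in C$; then $a,z,w$ is a path of length $2$ meeting $C$ only at $a$, and $C\cup\{z,w\}$ dominates $\G$ (it contains $C$ and $w$). Lemma \ref{triangle} gives $\depth R/I^4=0$, so $\depth R/I^5=0$.

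The substantive case is $\G^c_{W}$ disconnected, where $|W|\ge 2$ and $W$ splits as $W=W_1\sqcup W_2$ with both parts nonempty and every vertex of $W_1$ adjacent in $\G$ to every vertex of $W_2$. The key step is the claim that some vertex of $W$ sits at distance exactly $2$ from $C$: pick a shortest path $q_0,q_1,\dots,q_m$ from the set $W$ to the set $C$; then $m\ge 2$ because $W\cap N[C]=\emptyset$, and by minimality $q_1$ lies in neither $W$ nor $C$, so $q_1\in N(C)$ and $q_1$ is adjacent to some $a\in C$, forcing $m=2$. Relabel so that $q_0\in W_1$ and choose any $v\in W_2$; then $q_0$ is adjacent to $v$, and $P: a,q_1,q_0,v$ is a path of length $3$ meeting $C$ only at $a$. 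Finally $C\cup P$ is dominating: it contains $C$, which covers $N[C]$; every vertex of $W_1$ is adjacent to $v\in C\cup P$, and every vertex of $W_2$ is adjacent to $q_0\in C\cup P$, so $W$ is covered too. Lemma \ref{triangle} with $s=3$ now gives $\depth R/I^5=0$. Together with Theorem \ref{square2} and the previous Proposition (which disposes of the case $\diam(\G^c)\ge 3$, in fact with $\depth R/I^3=0$, whence $\depth R/I^5=0$ by persistence), this also completes the proof of Theorem \ref{square1}.

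I expect the only genuine obstacle to be the distance‑$2$ claim in the last case — one has to minimise over paths from the \emph{entire} set $W$ to $C$, not from a prescribed vertex, to be sure such a short path exists — together with the check that lengthening this length‑$2$ path by one more edge into $W_2$ keeps the union dominating. That check is precisely where the complete bipartiteness between $W_1$ and $W_2$ enters: the two endpoints $q_0\in W_1$ and $v\in W_2$ of $P$ lie on opposite sides, so between them they dominate all of $W$.
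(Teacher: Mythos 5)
Your proposal is correct and follows essentially the same route as the paper: in each case you build a dominating configuration consisting of the triangle $C$ plus an attached path of length at most $3$ (using the complete bipartiteness between the two parts of $V\setminus N[C]$ in the main case) and then apply Lemma \ref{triangle} (resp.\ Theorem \ref{LT} when $C$ itself dominates). The only cosmetic difference is your shortest-path argument to reach a vertex of $V\setminus N[C]$ at distance two from $C$, where the paper simply invokes connectedness of $\G$ to get an edge between $V\setminus N[C]$ and $N[C]$.
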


\begin{proof}
If $V = N[C]$, then $C$ is a dominating set of $\G$. This implies $\depth R/I^5 = 0$ by Theorem \ref{LT}. 
If $V \setminus  N[C]$ has only a vertex $u$, then $C+u$ is a dominating set of $\G$.
Since $\G$ is connected, $u$ is adjacent to a vertex $v \in N[C]$. 
Let $w \in C$ be a vertex adjacent to $v$. Then $\{u,v,w\}$ form a path of $\G$. Since $C\cup \{u,v,w\}$ is a dominating set in $\G$, $\depth R/I^5 = 0$ by Lemma \ref{triangle}. 
\par
It remains to prove the case $\G_{V \setminus N[C]}^c$ is disconnected. 
Let $V \setminus N[C] = V_1\sqcup V_2$ be a partition such that the induced graphs of $\G_{V \setminus N[C]}^c$ on $V_1,V_2$ are disconnected. Then every vertex of $V_1$ is adjacent to every vertex of $V_2$ in $\G$. Since $\G$ is connected, there exists a vertex in $V_1 \cup V_2$, say $u_1 \in V_1$, which is adjacent to a vertex $v \in N[C]$. Let $w$ be a vertex of  $C$ adjacent to $v$. Let $u_2$ be a vertex of  $V_2$ adjacent to $u_1$. Then $w,v,u_1,u_2$ form a path $P$ meeting $C$ at $w$ only.
Since every vertex of $N[C]$ is adjacent to a vertex of $C$ and every vertex of $V \setminus N[C]$ is adjacent either to $u_1$ or $u_2$, $C\cup P$ is a dominating set of $\G$. Therefore, $\depth R/I^5 = 0$ by Lemma \ref{triangle}.
\end{proof}

The conclusion of Theorem \ref{square2} is the best possible. 

\begin{Example} \label{best}
Let $\G$ be the graph in Figure 1.
Then $\depth R/I^t > 0$ for $t \le 4$ and $\depth R/I^5 = 0$ by Example \ref{union}. Since $\G_{V \setminus N[C]}^c$ is disconnected, $\depth R/I^2 = 1$ by Theorem \ref{square2}. By Theorem \ref{non-dominating1}, $\depth R/I^3 = \depth R/I^4 = 1$.
\end{Example}

Finally, we consider one of the main case for $H_\mm^1(R/I^t)\neq 0$; see Proposition \ref{TT}.

\begin{Theorem} \label{decrease}
Let $\G$ be a connected non-bipartite graph. Assume that there exists a vertex $v$ such that $\depth R_v/I_v^t = 0$.
Then $\depth R/I^{t+3} = 0$.
\end{Theorem}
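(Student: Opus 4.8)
The plan is to translate the hypothesis $\depth R_v/I_v^t = 0$ into a concrete combinatorial configuration inside $\G$ via Lemma \ref{reduction} and Theorem \ref{LT}, and then enlarge that configuration to a dominating one that forces $\depth R/I^{t+3}=0$, again by Theorem \ref{LT}. First I would invoke Lemma \ref{reduction}: setting $J$ to be the edge ideal of $\G_{V\setminus N[v]}$ in $S := k[x_i|\ i\not\in N[v]]$, the hypothesis $\depth R_v/I_v^t = 0$ is equivalent to $\depth S/J^t = 0$. In particular $\G_{V\setminus N[v]}$ is strongly non-bipartite, so by Theorem \ref{LT} there is a dominating set $U$ of $\G_{V\setminus N[v]}$ with $\G_U$ strongly non-bipartite and $\mu^*(\G_U) \le t-1$.

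Next I would build a dominating set of the whole graph $\G$ out of $U$. The set $U$ dominates $V\setminus N[v]$ but not necessarily $N[v]$; the task is to adjoin a few vertices — I expect at most three — so that the enlarged set dominates all of $V$ while increasing $\mu^*$ by at most $3$, and then apply Lemma \ref{extension} (each added adjacent vertex raises $\mu^*$ by at most $1$). Since $\G$ is connected, there is a path from $U$ into $v$: pick $u\in U$ and a shortest path in $\G$ from $u$ to $v$. Because $v\notin N[U']$ for $U'\subseteq V\setminus N[v]$ is possible, this path has length at least $2$; the vertices $v$ and its neighbor on the path, together with possibly one intermediate vertex, dominate $N[v]$ (every vertex of $N[v]$ is adjacent to $v$). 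So adjoining $v$ plus at most two path vertices to $U$ produces a set $W$ with $U\subseteq W\subseteq V$, $W$ dominating in $\G$, $\G_W$ strongly non-bipartite (it contains $\G_U$), and — by repeated application of Lemma \ref{extension}, adding the new vertices one at a time along the path so each is adjacent to the current set — $\mu^*(\G_W) \le \mu^*(\G_U) + 3 \le t+2$. Theorem \ref{LT} then yields $\depth R/I^{t+3} = 0$.

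The main obstacle is the careful bookkeeping of how many vertices must be added to pass from $U$ (dominating $V\setminus N[v]$) to a set dominating all of $V$, and ensuring each added vertex is adjacent to the set built so far so that Lemma \ref{extension} applies at every step. The key point is that $v$ alone dominates $N[v]$, so once $v$ is in the set the only extra vertices needed are those required to connect $U$ to $v$ along a path inside $\G$; since we may choose a shortest such path and only need to reach $v$, two intermediate vertices beyond $v$ itself suffice in the worst case — giving the bound $\delta = 3$. One should double-check the edge cases: when $U$ is already adjacent to $v$ (path length $2$, only $v$ and its neighbor on the path are added, so $\delta\le 2$), and when $V\setminus N[v]$ is empty or $\G_{V\setminus N[v]}$ disconnected, but the latter cannot happen when $\depth S/J^t=0$ since that forces $\G_{V\setminus N[v]}$ to be strongly non-bipartite, hence in particular to have the relevant component structure handled by Theorem \ref{LT}.
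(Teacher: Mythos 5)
Your proposal is correct and takes essentially the same route as the paper: reduce via Lemma \ref{reduction} and Theorem \ref{LT} to a dominating set $U$ of $\G_{V\setminus N[v]}$ with $\G_U$ strongly non-bipartite and $\mu^*(\G_U)<t$, then extend $U$ by at most three vertices along a path to $v$ to obtain a dominating set of $\G$, and conclude with Lemma \ref{extension} and Theorem \ref{LT}. The only step to spell out is why two intermediate vertices always suffice: since $U+v$ is dominating we have $V=N[U]\cup N[v]$, and connectedness of $\G$ yields adjacent vertices $u\in N[U]$ and $w\in N[v]$, so $U+u+w+v$ is the required extension with $\mu^*(\G_{U+u+w+v})\le\mu^*(\G_U)+3<t+3$ --- which is exactly how the paper pins down the bound.
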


\begin{proof}
Let $J$ denote the edge ideal of the graph $\G_{V \setminus N[v]}$ in $S := k[x_i|\ i\not\in N[v]]$.
By Lemma \ref{reduction}, the condition $\depth R_v/I_v^t = 0$ implies $\depth S/J^t = 0$.
By Theorem \ref{LT}, there is a dominating set $C$ of $\G_{V \setminus N[v]}$ such that $\G_C$ is a strongly non-bipartite graph with $\mu^*(\G_C) < t$. Since every vertex of $(V \setminus N[v]) \setminus C$ is adjacent to a vertex of $C$, $V \setminus N[v] \subseteq N[C]$. From this it follows that every vertex of $V$ is adjacent to $C$ or $v$. Hence $C+v$ is a dominating set of $\G$. \par
Since $\G$ is connected and $V = N[C] \cup N[v]$, there are two adjacent vertices $u \in N[C]$ and $w \in N[v]$.
It may happen that $u \in C$ or $w = v$. In any case, $U := C+u+w+v$ is a dominating set of $\G$ since it contains the dominating set $C+v$. Since $U$ is an extension of $C$ by a path of length at most three, we can apply  Proposition \ref{extension} to conclude that $\G_U$ is a strongly non-bipartite graph with 
$\mu^*(\G_U) \le \mu^*(\G_C)+3 < t+3.$
By Theorem \ref{LT}, this implies $\depth R/I^{t+3} = 0$.
\end{proof}

With regard to the above results we raise the following problem, which we are unable to find a counter-example.

\begin{Conjecture} 
Let $\G$ be a connected non-bipartite graph. If $\depth R/I^t = 1$, then $\depth R/I^{t+3} = 0$. 
\end{Conjecture}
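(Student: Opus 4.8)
The plan is to rewrite the hypothesis cohomologically and then split it along Proposition~\ref{TT}. Since $\depth R/I^t\ge 1$ is equivalent to $H_\mm^0(R/I^t)=0$, the assumption $\depth R/I^t=1$ says exactly that $H_\mm^0(R/I^t)=0$ and $H_\mm^1(R/I^t)\neq 0$. The first condition, $\mm\notin\Ass(I^t)$, is equivalent by Theorem~\ref{LT} to $s(\G)\ge t$, i.e. every dominating induced subgraph of $\G$ that is strongly non-bipartite has $\mu^*\ge t$; this is the crucial extra information in the hard case. Applying Proposition~\ref{TT} to the ideal $I^t$ (so that $I_v$ becomes $I_v^t=(I^t)_v$ and $\D_\a(I)$ becomes $\D_\a(I^t)$), the condition $H_\mm^1(R/I^t)\neq 0$ means that either (1) $\depth R_v/I_v^t=0$ for some $v\in V$, or (2) $\D_\a(I^t)$ is disconnected for some $\a\in\NN^n$.

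Case (1) is exactly the hypothesis of Theorem~\ref{decrease}, which immediately yields $\depth R/I^{t+3}=0$; so the whole difficulty is concentrated in case (2). There the target, by Theorem~\ref{LT}, is to produce from a disconnected $\D_\a(I^t)$ a dominating set $U$ of $\G$ with $\G_U$ strongly non-bipartite and $\mu^*(\G_U)\le t+2$. The guiding principle---already the engine of the proofs in Section~4 and of Theorem~\ref{decrease}---is that the constant $3$ should arise by first extracting a strongly non-bipartite induced subgraph $\G_C$ with $\mu^*(\G_C)\le t-1$ and then enlarging $C$ to a dominating set by a path of length at most $3$, each added edge costing at most $1$ in $\mu^*$ by Lemma~\ref{extension}.

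To carry this out I would analyse the facets of $\D_\a(I^t)$. By Lemma~\ref{facet1} every facet $F$ gives an associated prime $P_F$ of $I^t$, and these primes correspond to independent sets of $\G$. I would first treat the case in which some disconnecting facet $F$ is a \emph{non-maximal} independent set. Then $V\setminus N[F]\neq\emptyset$, and localizing at $P_F$ together with the splitting argument via \cite[Lemma~3.4]{HM} used in Theorem~\ref{dominating} shows that the edge ideal $K$ of $\G_{V\setminus N[F]}$ satisfies $\depth S/K^t=0$; by Theorem~\ref{LT} this hands us a strongly non-bipartite $\G_C$ with $C\subseteq V\setminus N[F]$, $\mu^*(\G_C)\le t-1$ and $N[C]\supseteq V\setminus N[F]$. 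It remains to dominate the part of $N[F]$ not already reached by $C$ while spending at most $3$ in $\mu^*$; the delicate point is that this undominated part lies inside $N[F]$ and can be large, so a single path need not reach it, and one must exploit the adjacency of $F$ to $N(F)$ to keep the extension short.

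The genuine obstacle---and the reason the statement is only conjectural---is the remaining sub-case, in which every disconnecting facet of $\D_\a(I^t)$ is a \emph{maximal} independent set. Then $N[F]=V$ for each such $F$, so no strongly non-bipartite induced subgraph is handed to us at all; the disconnection is of the symbolic type analysed in Theorem~\ref{symbolic1}, supported by two adjacent maximal independent sets, and the pentagon example following Theorem~\ref{dominating} shows that $H_\mm^1(R/I^t)\neq 0$ by itself does not force the depth to drop (there it persists at the value of a symbolic power). Here the hypothesis $H_\mm^0(R/I^t)=0$, i.e. $s(\G)\ge t$, must be brought to bear: using that $\G$ is non-bipartite together with $s(\G)\ge t$, one would have to manufacture a (necessarily non-dominating) odd cycle whose induced subgraph has $\mu^*\le t-1$ and which can be completed to a dominating set by a path of length at most $3$. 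Proving that $3$ edges always suffice, uniformly in $t$, is the step I expect to be hard, and it is precisely what the constant $\delta=3$ of the conjecture would require.
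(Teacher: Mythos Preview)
The statement you are addressing is a \emph{Conjecture} in the paper, not a theorem: the authors explicitly introduce it as a problem they ``are unable to find a counter-example'' for, and they give no proof. There is therefore no argument in the paper to compare your attempt against.

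Your proposal is, accordingly, not a proof but a strategy, and you are candid about this. The decomposition via Proposition~\ref{TT} is the natural one, and your observation that case~(1) is exactly Theorem~\ref{decrease} is what the paper itself points out immediately before stating the conjecture. Your diagnosis of case~(2)---a disconnected $\D_\a(I^t)$ all of whose separating facets are maximal independent sets---as the real obstruction is correct and matches the paper's implicit understanding: this is the ``symbolic'' situation of Theorems~\ref{symbolic1} and~\ref{dominating}, where $H_\mm^1$ is nonzero without any strongly non-bipartite seed of small $\mu^*$ being handed to you, and where the extra hypothesis $H_\mm^0(R/I^t)=0$ must somehow be exploited.

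One technical remark on your sub-case where a separating facet $F$ is a non-maximal independent set. The localisation you invoke is not literally Lemma~\ref{reduction} (which treats a single vertex), and \cite[Lemma~3.4]{HM} alone does not give what you claim. The clean route is \cite[Theorem~4.4]{LT}: if $P_F\in\Ass(I^t)$ and $F$ is not a maximal independent set, then there exists $U\subseteq V$ with $\G_U$ strongly non-bipartite, $\mu^*(\G_U)\le t-1$, and $F$ maximal independent in $V\setminus N[U]$; in particular $U\subseteq V\setminus N[F]$. This hands you the seed $C=U$ directly. Even with this fix, dominating the remainder of $V$ with at most three extra edges is not clear, so this sub-case is also open---the conjecture is genuinely unresolved in both branches of case~(2).
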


\noindent{\bf Acknowledgements}. 
This work is supported by grant ICRTM01-2020.07 of the International Center for Research and Postgraduate Training in Mathematics. Data sharing not applicable to this article as no datasets were generated or analysed during the current study.


\end{document}